\documentclass[reqno,11pt]{amsart}
\textwidth 6.5in
\textheight 9in
\topmargin -0.5in
\oddsidemargin 0in
\evensidemargin 0in

\usepackage{array,multirow}
\usepackage{amsmath}
\usepackage{amssymb}
\usepackage{amsfonts}
\usepackage{graphicx}
\usepackage{bm}
\usepackage{enumerate}
\usepackage{dsfont}
\usepackage[algo2e,linesnumbered,ruled]{algorithm2e}
\usepackage{float}
\usepackage{perpage}
\MakeSorted{figure}
\MakeSorted{table}

%%\usepackage{MnSymbol}
%%%%%%%%%\usepackage{showkeys}

%%%%%%%%%%%%%%%%%%%%%%%%%%%%
%%%% The line number package %%%%%%%%%%%
%%%%%%%%%%%%%%%%%%%%%%%%%%%%
\usepackage[pagewise]{lineno} 
%%%%%%%%%%%%%%%%%%%%%%%%%%%%
%%%%%%%%%%%%%%%%%%%%%%%%%%%%
\usepackage{verbatim}
\usepackage{subfigure}
\usepackage{color}

%-----------------------------------------------------------------------------
% Theorem environments
\newtheorem{theorem}{Theorem}[section]
\newtheorem{proposition}[theorem]{Proposition}
\newtheorem{lemma}[theorem]{Lemma}
\newtheorem{corollary}[theorem]{Corollary}

\theoremstyle{definition}
\newtheorem{definition}[theorem]{Definition}

\newtheorem{algorithm}[theorem]{Algorithm}

\theoremstyle{remark}
\newtheorem{remark}[theorem]{Remark}

%-----------------------------------------------------------------------------
%\renewcommand{\thepage}{\arabic{page} - {\small DRAFT-1 \today}}
%\renewcommand{\theequation}{\arabic{section}.\arabic{equation}}

% BEGIN: disable/enable margin notes
%-->disable margin notes
% \newcommand{\mnote}[1]{}
%-->enable margin notes
  \newcounter{mnote}
  \setcounter{mnote}{0}
  
  \let\oldmarginpar\marginpar
    \renewcommand\marginpar[1]{\-\oldmarginpar[\raggedleft\footnotesize #1]%
    {\raggedright\footnotesize #1}}
% END: disable/enable margin notes

%%%%%%%%%%%%%%%%%%%%%%%%%%%%%%%%%%%%%%%%%%%

%-----------------------------------------------------------------------------
\numberwithin{equation}{section}  %amsmath command: set the counter
\numberwithin{figure}{section}  %amsmath command: set the counter
                                 %tied to the section.
\numberwithin{table}{section}  %amsmath command: set the counter

  % (i) labels in enumerate. 
%\renewcommand{\theenumi}{(\arabic{enumi})}

\setlength{\textwidth}{6.28in}
\setlength{\oddsidemargin}{-.08in}
\setlength{\evensidemargin}{-.08in}

%%\include{symbols}
%%%%%%%%%%%%%%%%%%
%%%%%%%%%%%%%%%%%%%%%%%%%%
 %usata
\renewcommand\lll{|\kern-1pt|\kern-1pt|} %usata
 
 %usata
 %usata

%usata
%usata
 %usata

%%%%%%%%%%%%%%%%%%%%%%\newcommand{\de}{\partial\O} %usata
 %usata
 %usata
 %usata
%%%%%\newcommand{\jump}[1]{\lbrack\!\lbrack\,#1\,\rbrack\!\rbrack} %usata
\newcommand{\jump}[1]{\lbrack\!\lbrack #1 \rbrack\!\rbrack} %usata
 %usata

 %usata

 %usata

 %usata

%\newcommand{\rosso}[1]{\color{red} #1} % rosso

% Vettore

%MANCA LA {\mathcal T}AU
 %usata
 %usata
 %usata
 %usata
 %usata
%%%%%%%%%%%%%%%%%%%\newcommand{\av}[1]{\{#1\}} %usata
 
 %usata
 %usata
 %usata
\newcommand{\cT}{\mathcal{T}}

\newcommand{\Th}{\mathcal{T}_h}

 %usata
%% some arrows...

%\newcommand{\bil }{\arrowvert}
%\newcommand{\bia }{\bracevert}

 %usata
%\newcommand{\Th}{\mathcal{T}_h} %usata
%\newcommand{\hK}{\h_{\K}} %usata
%\newcommand{\h}{h} %usata
 %usata
 %usata
 %usata
%usata
%usata
%usata
%usata
%usata
%usata
%\newcommand{\Vh}{V^k_{\h}}%usata
%\newcommand{\btheta}{\theta}%usata

%usata
%---------------------------------------------------------------
%FOR ALL
 
% --------------------------------------------------------------
% virgolette

%---------------------------------------------------------------
% Vettore
%\newcommand{Vect}[1]{\boldsymbol{#1}}  
%---------------------------------------------------------------
%--------------------------------------
% DIVERGENCE

%--------------------------------------
%--------------------------------------
% DIVERGENCE
%\newcommand{\dim}{\textrm{dim}}
%--------------------------------------

%\newcommand{\mathcalW}{ \mathcal{W}}
%\newcommand{\mathcalQ}{ \mathcal{Q}}
 %%
%---------------------------------------------------------------
% COMANDI COLORE TESTO
 % magenta
 % nero  
 % rosso
 % blu
 %grigio
 %grigio
 %grigio

% --------------------------------------------------------------

% --------------------------------------------------------------

%\newcommand{\A}{ \mathfrak{K}}
%usata
%usata
 %usata

%\newcommand{\calP}{ \mathcal{P}}

%\newcommand{\bV}{ \mathbb{V}}
%\newcommand{\bW}{ \mathbb{W}}

%\newcommand{\HH}{{\rm{H}}}
%\newcommand{\PP}{{\rm{P}}}

%%%%%%%%%%%%%%%%%%%%%%%%%%%%%%

%%% Triple bar norm
\newcommand{\triplenorm}[1]{%
 \leftVert\kern-0.9pt\leftVert\kern-0.9pt\leftVert #1
 \rightVert\kern-0.9pt\rightVert\kern-0.9pt\rightVert}  
%%%%

\newcommand{\RE}{\mathds{R}}

\begin{document}
%\linenumbers %% Put the line number command here

\title[Multigrid for CR discretization of jump coefficients problems]
{Analysis of a multigrid preconditioner for Crouzeix-Raviart discretization of \\elliptic PDE 
       with jump coefficient}

\author[Y. Zhu]{Yunrong Zhu}
\email{zhu@math.ucsd.edu}
\address{Department of Mathematics, University of California at San Diego}
%\thanks{The work of the third author was supported in part
 % by NSF DMS-0715146 and DTRA Award HDTRA-09-1-0036.}
%\and 

\date{\today}

\begin{abstract}
	In this paper, we present a multigrid $V$-cycle preconditioner for the linear system arising from piecewise linear nonconforming Crouzeix-Raviart  discretization of second order elliptic problems with jump coefficients. The preconditioner uses standard conforming subspaces as coarse spaces. We showed that the convergence rate of the multigrid $V$-cycle algorithm will deteriorate rapidly due to large jumps in coefficient. However, the preconditioned system has only a fixed number of small eigenvalues, which are deteriorated due to the large jump in coefficient, and the effective condition number is bounded logarithmically with respect to the mesh size. As a result, the multigrid $V$-cycle preconditioned conjugate gradient algorithm converges nearly uniformly. Numerical tests show both robustness with respect to jumps in the coefficient and the mesh size.
\end{abstract}

\keywords{Multigrid, Preconditioner,  Conjugate Gradient,  Crouzeix-Raviart,  Jump Coefficients, Effective Condition Number}
\maketitle

%\tableofcontents

\section{Introduction}
\label{sec:intro}

In this paper, we present a multigrid preconditioner for the linear system arising from piecewise linear nonconforming Crouzeix-Raviart (CR) discretization of the following second order elliptic problems with jump coefficients: 
\begin{equation} \label{eqn:model}
	-\nabla \cdot (\kappa\nabla u) =f \mbox{  in  } \Omega, \qquad u|_{\partial \Omega} =0.
\end {equation}
Here, $\Omega\subset \RE^{d}$ ($d=2, 3$) is an open polygonal domain and $f\in L^2(\Omega)$. We assume that the diffusion coefficient $\kappa \in L^{\infty}(\Omega)$ is piecewise constant, namely, $\kappa(x)|_{\Omega_{m}} = \kappa_{m}$ is a constant for each (open) polygonal subdomain $\Omega_{m}$  satisfying $\cup_{m=1}^M
\overline{\Omega}_m=\overline{\Omega}$ and $\Omega_m
\cap \Omega_n =\emptyset$ for $m\neq n$.

Developing efficient multilevel solvers/preconditioners for the CR discretization is not only important for its own sake, but it has several important applications. In particular, it can be used to develop efficient solvers for other discretizations of \eqref{eqn:model}. For example, by using the equivalence between CR discretization and mixed methods (cf. \cite{Arnold.D;Brezzi.F1985}), the preconditioners for CR discretization can be applied to solve mixed finite element discretizations for \eqref{eqn:model} (see \cite{Brenner.S1992,Chen.Z1996} for the multigrid algorithms in the case of smooth coefficients). This relationship is also used in \cite{Hoppe.R;Wohlmuth.B1997} to analyze multigrid algorithms for mixed finite element discretization on adaptively refined meshes. Another important application is on the design of efficient multilevel solvers for interior penalty discontinuous Galerkin (IPDG) methods; we refer to \cite{Ayuso-de-Dios.B;Zikatanov.L2009} for the Poisson problem, and to \cite{Ayuso-de-Dios.B;Holst.M;Zhu.Y;Zikatanov.L2010} for \eqref{eqn:model} with jump coefficients. 

A lot of work has been done to develop multilevel solvers and preconditioners for the piecewise linear CR discretization of \eqref{eqn:model} in the context of smooth (or constant) coefficients. Multigrid algorithms were developed and analyzed in \cite{Brenner.S1989,Braess.D;Verfurth.R1990,Bramble.J;Pasciak.J;Xu.J1991,Vassilevski.P;Wang.J1995,Brenner.S2004b}, and two-level additive Schwarz preconditioners are presented in  \cite{Brenner.S1996}. Hierarchical and BPX-type multilevel preconditioners were proposed in \cite{Oswald.P1992e}, and their optimality was shown in \cite{Oswald.P2008}. Most of the aforementioned works define the multilevel structure using the natural sequences of nonconforming spaces. Since these nonconforming spaces are non-nested, special intergrid transfer operators (restriction and prolongation) are needed in the analysis and implementation of the algorithms.

On the other hand, we observe that the standard piecewise linear conforming finite element space is a proper subspace of the CR space on the same triangulation. Therefore,  one may be able to reduce the nonconforming discretization to the conforming one for which efficient and robust multilevel solvers/preconditioners are available. Due to the nested nature of the finite element spaces, one can also apply the standard multilevel analysis. Moreover, in the implementation of the algorithms one does not need special intergrid transfer operators; instead one could just use the natural inclusion operators. In this article, we propose a multigrid preconditioner for solving CR discretization, which consists of a standard smoother (Gauss-Seidel, or Jacobi iterative methods) on the fine nonconforming space and standard multigrid solver on the conforming subspaces.  The only difference between this algorithm and the standard multigrid algorithm on conforming spaces is the prolongation and restriction operators on the finest level. Since the spaces are nested, the prolongation is simply the natural inclusion operator from the conforming space to the CR space. Some preliminary numerical results of this paper has been reported in \cite{Ayuso-de-Dios.B;Holst.M;Zhu.Y;Zikatanov.L2011}. We remark that the idea of using conforming subspace as preconditioner for CR discretization in the context of smooth coefficients has been used in  \cite{Xu.J1989,Xu.J1996,Cowsar.L1993,Oswald.P1996}, and \cite{Sarkis.M1994} for jump coefficient case.

This article could be thought as an extension of the results in \cite{Ayuso-de-Dios.B;Holst.M;Zhu.Y;Zikatanov.L2010}, where a BPX-type (additive) multilevel preconditioner for CR discretization was developed, and was applied to solve the whole family of interior penalty DG discretizations (both symmetric and nonsymmetric) for \eqref{eqn:model}. No analysis on multiplicative multilevel preconditioner was given in \cite{Ayuso-de-Dios.B;Holst.M;Zhu.Y;Zikatanov.L2010}. In this article,  we discuss the robustness of a multiplicative preconditioner, the multigrid $V$-cycle preconditioner.  The analysis of the algorithm relies on some technical tools developed in \cite{Xu.J;Zhu.Y2008} and \cite{Ayuso-de-Dios.B;Holst.M;Zhu.Y;Zikatanov.L2010}. We show that the convergence rate of multigrid $V$-cycle algorithm will deteriorate rapidly due to large jump in the coefficient. On the other hand, we also show that the preconditioned system has only a fixed number of small eigenvalues deteriorated by the jump of the coefficients and mesh size, and the other eigenvalues are bounded nearly uniformly. Therefore, the resulting multigrid preconditioned PCG algorithm is robust with respect to the coefficient $\kappa$ and nearly optimal with respect to the mesh size $h$. 

Although the proof technique is similar to that in \cite{Xu.J;Zhu.Y2008} for the conforming case, we observe a major difference in the conclusion in Theorem~\ref{thm:vp} when $d=2$.  In the conforming case,  the condition number of $BA$ won't deteriorate due to the coefficients when $d=2$, as was pointed out in \cite[Remark 5.2]{Xu.J;Zhu.Y2008}. However, the condition number of $BA$  depends on the coefficient  in the current nonconforming case, as we can see from the numerical tests (cf. Table~\ref{tab:2d} and Figure~\ref{fig:eig2d}). 
 
The remaining of this paper is organized as follows. In Section \ref{sec:cr}, we give basic notation, the finite element discretizations. We also review the PCG convergence theory, and some properties of an interpolation operator introduced in \cite{Ayuso-de-Dios.B;Holst.M;Zhu.Y;Zikatanov.L2010}.  In Section \ref{sec:mg}, we present the multigrid algorithm, discuss its implementation, and prove the convergence rate and the robustness as a preconditioner. Finally, we give some numerical experiments to verify our theory in Section~\ref{sec:num}.

Throughout the paper, we will use the notation $x_1\lesssim y_1$, and $x_2\gtrsim y_2$,
whenever there exist constants $C_1, C_2$ independent of the mesh size
$h$ and the coefficient $\kappa$ or other parameters that $x_1$,
$x_2$, $y_1$ and $y_2$ may depend on, and such that $x_1 \le C_1 y_1$
and $x_2\ge C_2 y_2$.
%%%%%%%%%%%

\section{Preliminaries}
\label{sec:cr}
In this section, we introduce some basic notation and finite element spaces. We give an overview of the PCG convergence theory. We also recall some approximation and stability properties of the interpolation operator introduced in \cite{Ayuso-de-Dios.B;Holst.M;Zhu.Y;Zikatanov.L2010}, which play an essential role in the convergence analysis of the multigrid algorithm. 
\subsection*{Finite Element Spaces}
The weak form of the model problem \eqref{eqn:model} reads: Find $u\in H_{0}^{1}(\Omega)$ such that
\begin{equation} \label{eqn:weak}
a(u, v) :=( \kappa \nabla u, \nabla v)=(f, v) \quad \forall v\in H_{0}^{1}(\Omega)\,.
\end {equation}

We assume that there is an initial (quasi-uniform) triangulation 
$\mathcal{T}_{0}$ such that $\kappa_{T}:=\kappa(x)|_{T}$ is a constant for all $T\in\mathcal{T}_{0}$.
Let $\mathcal{T}_{l}:= \mathcal{T}_{h_{l}}$ ($l =1, \cdots, L$) be a family of uniform refinement  of $\mathcal{T}_{0}$ with mesh size $h_{l}$.  Without loss of generality, we assume that the mesh size $h_{l} \simeq 2^{L-l} h \;\;(l = 0, \cdots, L)$ for $h = h_{L}$. We denote $\mathcal{E}_{h}$ the set of all edges (in 2D) or faces (in 3D) of $\mathcal{T}_{h}$.
Let  $V^{CR}_{h}$ be the piecewise linear nonconforming Crouzeix-Raviart finite element space defined by: 
\begin{equation*}%\label{defCR}
V^{CR}_{h}\!=\!\left\{ v\in L^{2}(\Omega) \, : \, v_{|_{T}}
\in\, \mathbb{P}^{1}(T) \, \forall T \in \mathcal{T}_{h}\, \mbox{ and  }
\int_{e} 
\jump{v}_{e}ds=0 \,\, \forall\, e\in \mathcal{E}_{h}\right\},
\end{equation*}
where $\mathbb{P}^{1}(T)$ denotes the space of linear polynomials on $T$ and $\jump{v}_{e}$ denotes the jump across the edge/face $e\in \mathcal{E}_{h}$ with $\jump{v}_{e} = v$ when $e\subset \partial \Omega$.  
%We define
%\begin{equation*}
%(v,w)_{h}=\sum_{T \in \mathcal{T}_{J}} \int_{T} vw dx, \mbox{   and    } (v, w)_{\kappa} = \sum_{T\in \mathcal{T}_{J}} \int_{T}\kappa_{T} vw dx.
%\end{equation*}  
Then the $\mathbb{P}^{1}$-nonconforming finite element discretization of \eqref{eqn:model} reads:
\begin{equation}\label{eqn:cr}
  \mbox{Find } u\in V^{CR}_{h} :\;\; a_{h}(u, w) :=\displaystyle\sum_{T\in \mathcal{T}_{h}} \int _{T}\kappa_{T}\nabla u \cdot \nabla w=(f, w),  \quad\forall\, w\in V^{CR}_{h}. 
 \end{equation}
The bilinear form $a_{h}(\cdot, \cdot)$ induced a natural energy norm: $|v|_{h,\kappa}:=\sqrt{a_{h}(v,v)}$ for any $v\in V^{CR}_{h}$. We denote the full (broken) weighted $H^{1}$ norm by 
$\|v\|^{2}_{h,\kappa}:=\|v\|^{2}_{0,\kappa}+ |v|^{2}_{h,\kappa}.$ 
For any $w\in H^{1}(\Omega)$, let $\|w \|_{0,\kappa}$  and $|w|_{1,\kappa}$ be the weighted $L^{2}$ norm and weighted $H^{1}$ semi-norm defined as $$\|w\|^{2}_{0,\kappa}:=\int_{\Omega}\kappa |w|^{2} dx, \qquad  |w|_{1,\kappa}^{2} :=\int_{\Omega} \kappa |\nabla w|^{2} dx.$$ 
Let $A$ be the operator induced by the bilinear form $a_{h}(\cdot, \cdot)$, namely
$$
	(A v, w) =(v, w)_{A}: = a_{h}(v ,w),\qquad \forall v, w\in V_{h}^{CR}.
$$
In the whole paper, we will use the notation $\|\cdot \|_{A}$ or $|\cdot |_{h,\kappa}$ interchangeably for the energy norm.
Then solving \eqref{eqn:cr}  is equivalent to solve solve the linear system 
\begin{equation}
\label{eqn:linear}
	Au = f.
\end{equation}

In the multigrid preconditioner defined in the next section, on each level $l=0, \cdots, L$ we use standard conforming finite element discretization of \eqref{eqn:model} as coarse spaces: 
\begin{equation}
\label{prob:c}
\mbox{Find  } u_{l}\in V_{l}:\quad a(u_{l}, v_{l}) = (f, v_{l}), \qquad \forall v_{l} \in V_{l}.
\end{equation}
Here $V_{l} :=\{v\in H_{0}^{1}(\Omega): v|_{T} \in \mathbb{P}_{1}(T),\;\; \forall T\in \cT_{l}\}$ is the standard continuous piecewise $\mathbb{P}_{1}$-conforming finite element space defined on $\mathcal{T}_{l}$. For each $l = 0, \cdots, L$, we define the induced operator for \eqref{prob:c} as 
\[
(A_l v_l,w_l) = a(v_l,w_l), \quad \forall v_l,  w_l\in V_l.
\] 
In the sequel,  let us denote $V_{L+1}:= V_{h}^{CR}$ for simplicity.  We remark that all these finite element spaces are nested, that is,
\begin{equation}
\label{eqn:nested}
	V_{0} \subset\cdots\subset V_{L} \subset V_{L+1}.
\end{equation}
We also note that $a_{h}(v_{l}, w_{l}) = a(v_{l}, w_{l})$ when $v_{l}, w_{l} \in V_{l}$ for $l =0,1, \cdots, L$ since the space $V_{l}$ is conforming. Therefore, with a little abuse of notation, we will use $a_{h}(\cdot, \cdot)$ as the bilinear form in all the finite element spaces including the conforming ones. 

\subsection*{PCG Convergence }
Let $B$ be an symmetric positive definition (SPD) operator, known as a preconditioner. Instead of solving \eqref{eqn:linear} directly, we consider solving the following \emph{preconditioned system} by conjugate gradient method:
$$
	BA u = Bf.
$$
Since $A$ is SPD operator, $BA$ is also SPD with respect to the energy inner product $(\cdot, \cdot)_{A}$. Based on the standard PCG theory, the convergence rate of PCG algorithm can be bounded as 
$$
	\frac{\|u - u_{k}\|_{A}}{ \|u -u_{0}\|_{A}} \le 2\left( \frac{\sqrt{\mathcal{K}(BA)} -1}{ \sqrt{\mathcal{K}(BA)} +1}\right)^{k},
$$
where $u_{0}$ is the initial guess, $\{u_{k}\}$ is the solution sequence, and $\mathcal{K}(BA)$ is the (generalized) condition number of $BA$.  

This convergence rate is not sharp, in particular, when $BA$ contains some extreme eigenvalues. Suppose that the spectrum $\sigma(BA)$ of $BA$,
is divided in two sets:
$\sigma(BA)=\sigma_0(BA)\cup \sigma_1(BA)$, where
$\sigma_0(BA)=\{\lambda_1,\ldots ,\lambda_{m}\}$ contains of
all ``bad'' eigenvalues, and $\sigma_1(BA)=\{\lambda_{m+1}, \ldots , \lambda_{N}\}$ contains the
remaining eigenvalues, which are bounded below and above uniformly. Namely, there exist some constants $0<a<b$ such that $\lambda_j \in [a,b]$ for $j=m+1,\ldots, N$, with
$N=\mbox{dim}(V_h^{CR})$. Then,  the error
at the $k$-th iteration of PCG algorithm is bounded by 
(see e.g.~\cite{Axelsson.O1994,Hackbusch.W1994,Axelsson.O2003}):
\begin{equation}
\label{eqn:CG}
\|u-u_k\|_{A}\le 2(\mathcal{K}(BA)-1)^{m}
\left(\frac{\sqrt{b/a}-1}{\sqrt{b/a}+1}\right)^{k-{m}}\|u-u_0\|_{A}\;.
\end{equation}
This convergence rate estimate \eqref{eqn:CG} implies that if there are
only a few small eigenvalues of $BA$ in $\sigma_0(BA)$,
then the \emph{asymptotic} convergence rate of the resulting PCG
method will be dominated by the factor
$\frac{\sqrt{b/a}-1}{\sqrt{b/a}+1}$, i.e., by $\sqrt{b/a}$ where
$b=\lambda_{N}(BA)$ and $a=\lambda_{m+1}(BA)$.  We define $(b/a)$, which determines the asymptotic convergence rate, as \emph{effective condition number} (cf. \cite{Xu.J;Zhu.Y2008}):  
\begin{definition}
\label{def:effcond}
Let $V$ be a real $N$-dimensional Hilbert space, and
$T:V\to V$ be an SPD linear
operator, with eigenvalues $0 < \lambda_{1} \le \cdots \le
\lambda_{N}$. The \emph{$m$-th effective condition number} of
$T$ is defined by
    \begin{equation*}
	 \mathcal{K}_{m}(T) := \frac{\lambda_{N}(T)}{\lambda_{m+1}(T)}.
	 \end{equation*}
\end{definition}
Based on the estimate in \eqref{eqn:CG}, we need to analyze the spectrum of the preconditioned system carefully. That is what we are going to do for multigrid preconditioner. 
%%%%%%%%%%%%%%%%%%%%%%%%%%%%%%
%%%%%%%%%%%%%%%%%%%%%%%%%%%%%%%%%%%%
\subsection*{An Interpolation Operator}
Given any $l=0, 1, \cdots, L$,  let us review some main properties of an interpolation operator  $Q_{l}^{\kappa}: V_{h}^{CR} \to V_{l}$ introduced in \cite{Ayuso-de-Dios.B;Holst.M;Zhu.Y;Zikatanov.L2010}, which play a key role in the analysis of multilevel preconditioners. 
\begin{lemma}
\label{lm:interpolation}
There exists an interpolation operator $Q_{l}^{\kappa}: V_{h}^{CR} \to V_{l}$
satisfying the following approximation and stability properties:
\begin{align}
\mbox{ Approximation:  } \qquad \|(I-Q_{l}^{\kappa})v\|_{0,\kappa} & \lesssim  h_{l}|\log (2h_{l} /h)|^{\frac{1}{2}} \|v\|_{h,\kappa},\quad &\forall\, v\in V_{h}^{CR}, &&\label{eqn:app}\\
\mbox{ Stability:  } \qquad\qquad\qquad\qquad |Q_{l}^{\kappa}v|_{1,\kappa} & \lesssim |\log (2h_{l} /h)|^{\frac{1}{2}} \|v\|_{h,\kappa}\quad &\forall\, v\in V_{h}^{CR}, &&\label{eqn:stab}
\end{align}
with constants independent of the coefficient $\kappa$ and mesh size. 
\end{lemma}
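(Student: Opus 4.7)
The plan is to construct $Q_l^{\kappa}$ as a Scott--Zhang/Chen--Nochetto--Xu type quasi-interpolation operator whose vertex values are weighted averages of the CR function, with weights chosen to respect the coefficient jumps, and then to reduce the two bounds to a weighted discrete Sobolev--type inequality that produces the logarithmic factor $|\log(2h_l/h)|^{1/2}$. Specifically, for every vertex $z$ of $\mathcal{T}_l$ I would select a distinguished subdomain index $m(z)$ by choosing $\Omega_{m(z)}$ to be a subdomain of \emph{largest} $\kappa_m$ among those whose closure meets the coarse patch $\omega_z = \cup\{T_l\in\mathcal{T}_l : z\in\overline{T_l}\}$. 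Then set
\begin{equation*}
(Q_l^{\kappa} v)(z) = \frac{1}{|\mathcal{S}_z|}\sum_{T\in\mathcal{S}_z} v|_T(z), \qquad \mathcal{S}_z = \{T\in\mathcal{T}_h : z\in\overline{T},\ T\subset \overline{\Omega_{m(z)}}\},
\end{equation*}
and set $(Q_l^{\kappa} v)(z)=0$ if $z\in\partial\Omega$. Since $v\in V_h^{CR}$ is a polynomial on each $T\in\mathcal{T}_h$, the point evaluation $v|_T(z)$ is meaningful. Define $Q_l^{\kappa}v\in V_l$ by linear extension of these nodal values.

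The next step is to reduce both estimates to a local oscillation bound on each coarse element $T_l\in\mathcal{T}_l$. By standard scaling and the triangle inequality, for every constant $c$ (to be chosen per $T_l$),
\begin{equation*}
h_l^{-2}\|(I-Q_l^{\kappa})v\|_{L^2(T_l)}^2 + |Q_l^{\kappa}v|_{H^1(T_l)}^2
\lesssim \sum_{z\in\overline{T_l}} h_l^{d-2}\bigl|(Q_l^{\kappa}v)(z) - c\bigr|^2 + |v|_{h,1,T_l}^2,
\end{equation*}
where $|\cdot|_{h,1,T_l}$ is the broken $H^1$-seminorm over the fine elements in $T_l$. Choosing $c$ to be the mean of $v$ over a fine element in $\omega_z\cap\Omega_{m(z)}$ (or over all of $\Omega_{m(z)}$ for boundary patches), the coefficient-aware construction guarantees that whenever $T_l\subset\Omega_n$ with $n\neq m(z)$, one has $\kappa_n\le\kappa_{m(z)}$, so weighting the display above by $\kappa_{T_l}$ can always be absorbed by $\kappa_{m(z)}$, which is exactly what is needed to obtain $\kappa$-independent constants.

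It then remains to control the weighted vertex sum, for which I would invoke the key discrete Sobolev--type inequality of Bramble--Xu adapted to the nonconforming, jump-coefficient setting (as developed in \cite{Xu.J;Zhu.Y2008} and extended in \cite{Ayuso-de-Dios.B;Holst.M;Zhu.Y;Zikatanov.L2010}):
\begin{equation*}
\sum_{z} \kappa_{m(z)}\, h_l^{d-2}\bigl|(Q_l^{\kappa}v)(z)-c_{T_l(z)}\bigr|^2 \lesssim \bigl|\log(2h_l/h)\bigr|\, \|v\|_{h,\kappa}^2,
\end{equation*}
which follows by expressing each nodal difference as a telescoping sum of averages and jumps of $v$ along a chain of fine elements lying in a single subdomain, then applying a weighted Cauchy--Schwarz inequality and counting the chains. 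Combining this with the local reduction yields the approximation bound \eqref{eqn:app} (with $h_l$) and the stability bound \eqref{eqn:stab} (without $h_l$).

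The main obstacle is the third step. The interplay among three features -- the nonconformity of $v$ (so that one must carry both elementwise gradients and jump terms across fine edges), the presence of arbitrary coefficient jumps (so the usual chain argument must stay inside a fixed subdomain $\Omega_{m(z)}$), and the need for a logarithmic rather than algebraic dependence on $h/h_l$ -- forces a careful routing of the connecting chains and a weighted averaging near subdomain interfaces. This is the technical core of the estimate, and where the dominant-subdomain rule for $m(z)$ is essential: it guarantees that the weights encountered along any chain never exceed the weight attached to the vertex, so the resulting bound is uniform in $\kappa$.
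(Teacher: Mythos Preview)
The paper does not actually prove this lemma in the text; immediately after the statement it defers entirely to the Appendix of \cite{Ayuso-de-Dios.B;Holst.M;Zhu.Y;Zikatanov.L2010} for both the construction of $Q_l^\kappa$ and the proof of \eqref{eqn:app}--\eqref{eqn:stab}. Your sketch is essentially the same construction and argument carried out in that reference: a Cl\'ement/Scott--Zhang type quasi-interpolant whose nodal values at each coarse vertex $z$ are averages taken inside the subdomain of largest coefficient touching the coarse patch $\omega_z$, followed by a discrete Sobolev-type chaining argument (adapted from \cite{Xu.J;Zhu.Y2008} to piecewise $H^1$ functions) that produces the factor $|\log(2h_l/h)|^{1/2}$.

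One small point worth tightening in your local reduction: the bound you wrote for $h_l^{-2}\|(I-Q_l^\kappa)v\|_{L^2(T_l)}^2$ drops the term $h_l^{-2}\|v-c\|_{L^2(T_l)}^2$. Controlling that piece requires a broken Poincar\'e--Friedrichs inequality on the coarse patch for CR functions, and it is precisely here that the \emph{full} weighted norm $\|v\|_{h,\kappa}$ (rather than only the seminorm $|v|_{h,\kappa}$) is needed on the right-hand side, consistent with the paper's emphasis that the full norm cannot in general be replaced by the seminorm. With that addition the plan is sound and matches the cited proof.
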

A construction of such an operator  and the proof of the above results are given in the Appendix of \cite{Ayuso-de-Dios.B;Holst.M;Zhu.Y;Zikatanov.L2010}. We would like to
point out that the operators $Q_{l}^{\kappa}$ ($l=0, 1, \cdots, L$) are not used in the actual
implementation of the preconditioner. 

Observe that on the right hand side
of \eqref{eqn:app} and \eqref{eqn:stab}, the bounds are given in terms
of the (broken) weighted full $H^{1}$-norm $\|v\|_{h,\kappa}.$ In general,
one cannot replace the norm $\|v\|_{h,\kappa}$ by the energy norm $|v|_{h,\kappa}$ induced by the bilinear form $a_{h}(\cdot, \cdot)$. To replace this full weighted norm by the energy norm, we may use the Poincar\'e-Friedrichs inequality for the nonconforming finite element
space (cf. \cite{Dolejsi.V;Feistauer.M;Felcman.J1999,Brenner.S2003a}) to obtain:
\begin{eqnarray*}
	\|v\|^{2}_{0,\kappa} &\le& \left(\max_{T\in \Th} \kappa_{T}\right)  \int_{\Omega}|v|^{2} dx \lesssim  \left(\max_{T\in \Th} \kappa_{T}\right) \sum_{T\in \Th}\|\nabla v\|_{0,T}^{2} \lesssim \frac{\max_{T\in \Th} \kappa_{T}}{\min_{T\in \Th} \kappa_{T}} |v|^{2}_{h, \kappa}.
\end{eqnarray*}
By this inequality, we have 
\begin{corollary}
\label{cor:inter-jmp}
There exists an interpolation operator $Q_{l}^{\kappa}: V_{h}^{CR} \to V_{l}$
satisfying the following approximation and stability properties:
\begin{eqnarray}
		\|(I-Q_{l}^{\kappa}) v \|_{0,\kappa} & \lesssim& \mathcal{J}^{\frac{1}{2}}(\kappa) h_{l} |\log (2h_{l}/h)|^{\frac{1}{2}} |v |_{h,\kappa}\;, \quad \forall\, 	v\, \in V_{h}^{CR}\;, \label{eqn:japp}\\
	| Q_{l}^{\kappa} v  |_{1,\kappa} &\lesssim& \mathcal{J}^{{\frac{1}{2}}}(\kappa) |\log (2h_{l} /h)|^{\frac{1}{2}} |v |_{h,\kappa}\;, \quad \forall\, 	v\, \in V_{h}^{CR}\;, \label{eqn:jstab}
\end{eqnarray}	
where $\mathcal{J}(\kappa) = \max_{T\in \Th} \kappa_{T} / \min_{T\in \Th} \kappa_{T}$ is the jump of the coefficient.
\end{corollary}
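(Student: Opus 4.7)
The plan is to derive Corollary~\ref{cor:inter-jmp} as a direct consequence of Lemma~\ref{lm:interpolation} by replacing the full broken weighted norm $\|v\|_{h,\kappa}$ on the right-hand sides of \eqref{eqn:app} and \eqref{eqn:stab} with the energy seminorm $|v|_{h,\kappa}$, at the price of the coefficient-dependent factor $\mathcal{J}^{1/2}(\kappa)$. The essential observation is that the chain of inequalities displayed just before the corollary already does almost all the work: by bounding $\kappa$ pointwise by its maximum, applying the nonconforming Poincar\'e--Friedrichs inequality on $\Omega$, and then restoring $\kappa$ in the gradient term via division by its minimum, one obtains
\begin{equation*}
\|v\|_{0,\kappa}^{2} \lesssim \mathcal{J}(\kappa)\,|v|_{h,\kappa}^{2}\qquad \forall\, v\in V_{h}^{CR}.
\end{equation*}

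From here I would simply add $|v|_{h,\kappa}^{2}$ to both sides and use that $\mathcal{J}(\kappa)\ge 1$ to deduce
\begin{equation*}
\|v\|_{h,\kappa}^{2} = \|v\|_{0,\kappa}^{2} + |v|_{h,\kappa}^{2} \lesssim \mathcal{J}(\kappa)\,|v|_{h,\kappa}^{2},
\end{equation*}
so that $\|v\|_{h,\kappa}\lesssim \mathcal{J}^{1/2}(\kappa)\,|v|_{h,\kappa}$. Substituting this bound on the right-hand side of \eqref{eqn:app} yields \eqref{eqn:japp}, and substituting it into \eqref{eqn:stab} yields \eqref{eqn:jstab}; the hidden constants remain independent of $\kappa$ and $h$ because they inherit only the (coefficient-independent) constants from Lemma~\ref{lm:interpolation} and from the Poincar\'e--Friedrichs inequality for $V_{h}^{CR}$.

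There is no serious obstacle in this argument since both ingredients are already cited: the only nontrivial point worth double-checking is that the nonconforming Poincar\'e--Friedrichs inequality used in the preceding display is valid under our hypotheses (piecewise linear CR functions vanishing in the mean on boundary edges, cf. \cite{Brenner.S2003a}), and that its constant is independent of $h$. With that confirmed, the corollary follows by direct substitution and requires no additional estimates, so the proof can be presented in just a few lines.
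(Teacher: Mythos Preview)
Your proposal is correct and follows essentially the same approach as the paper: the paper does not give a separate proof for this corollary but simply derives the inequality $\|v\|_{0,\kappa}^{2}\lesssim \mathcal{J}(\kappa)\,|v|_{h,\kappa}^{2}$ (via the nonconforming Poincar\'e--Friedrichs inequality) in the displayed line immediately preceding the statement and then writes ``By this inequality, we have'' before stating the corollary. Your write-up spells out the substitution into Lemma~\ref{lm:interpolation} explicitly, which is exactly what the paper intends.
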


Note the approximation and stability properties in Corollary~\ref{cor:inter-jmp} depend on the jump $\mathcal{J}(\kappa)$ of the coefficient. On the other hand, we may impose certain constraints on the finite element space $V_{h}^{CR}$ to get robust approximation and stability properties. For this purpose, let $\mathcal{I}$ be the index set of \emph{floating} subdomains (cf. \cite{Toselli.A;Widlund.O2005}), namely the subdomains not touching the Dirichlet boundary:
\begin{equation}\label{def:I}
 {\mathcal I}:=\left\{\,\, i \,\, :\,\, \mbox{meas}_{d-1} (\partial \Omega
\cap \partial \Omega_{i}) = 0\,\right\}\;.
\end{equation} 
We introduce the subspace
$\widetilde{V}_{h}^{CR} \subset V_{h}^{CR}$:
\begin{equation}\label{vtilde}
  \widetilde{V}_{h}^{CR}:= 
\left\{v \in V_{h}^{CR}: \int_{\Omega_{i}} v dx =0 ,\;\; \forall i\in
  {\mathcal I} \right\},
\end{equation}
The key feature of this subspace \eqref{vtilde} is that the
Poincar\'e-Friedrichs inequality for the nonconforming finite element
space (cf. \cite{Dolejsi.V;Feistauer.M;Felcman.J1999,Brenner.S2003a}) holds
on each subdomain, and this allows us to replace the full weighted $H^{1}$-norm
$\|v\|_{h,\kappa}$ by the energy norm $|v|_{h,\kappa}$, for any
$v\in\widetilde{V}_{h}^{CR}$. 

We remark that the condition $\int_{\Omega_{i}}v dx =0$ in \eqref{vtilde} is not essential; other conditions could be used (see \cite{Toselli.A;Widlund.O2005}) as
long as they allow for the application of a Poincar\'e-type
inequality. At this point, we would like to emphasize that the dimension of
$\widetilde{V}_{h}^{CR}$ is related to the number of floating
subdomains and in fact, $\dim(\widetilde{V}_{h}^{CR})={\rm dim}(V_{h}^{CR}) -
m_0$, where $m_{0} = \#\mathcal{I}$ is the cardinality of $\mathcal{I}$. 
% Hence, it can be seen that the number of restrictions needed is
% related to the number of small eigenvalues that ``pollute'' the
% spectrum of $BA_0^{CR}.$ 

By restricting now the action of the operator
$Q_{l}^{\kappa}$ to functions in $\widetilde{V}_{h}^{CR}$, we have the
following result, as an easy corollary from Lemma~\ref{lm:interpolation}. 
\begin{corollary}\label{cor:P}
  Let $\widetilde{V}_{h}^{CR}\subset V_h^{CR}$ be the subspace defined in
  \eqref{vtilde}. There exists an operator $Q_{l}^{\kappa} :
  V_{h}^{CR} \to V_{l}$ satisfying the following approximation and stability properties:
\begin{eqnarray}
		\|(I-Q_{l}^{\kappa}) v \|_{0,\kappa} &\lesssim& h_{l} |\log (2h_{l} /h)|^{\frac{1}{2}} |v |_{h,\kappa}\;, \quad \forall\, 	v\, \in \widetilde{V}_{h}^{CR},\label{eqn:wapp}\\
	| Q_{l}^{\kappa} v  |_{1,\kappa} &\lesssim& |\log (2h_{l}/h)|^{\frac{1}{2}} |v |_{h,\kappa}\;, \quad \forall\, 	v\, \in \widetilde{V}_{h}^{CR}.	\label{eqn:wstab}
\end{eqnarray}
\end{corollary}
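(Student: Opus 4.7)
The plan is to derive Corollary~\ref{cor:P} directly from Lemma~\ref{lm:interpolation} by showing that, on the constrained subspace $\widetilde{V}_{h}^{CR}$, the full weighted broken norm $\|v\|_{h,\kappa}$ can be controlled by the energy semi-norm $|v|_{h,\kappa}$ with a constant independent of the jump $\mathcal{J}(\kappa)$. Since
\[
\|v\|_{h,\kappa}^{2} = \|v\|_{0,\kappa}^{2} + |v|_{h,\kappa}^{2} = \sum_{m=1}^{M}\kappa_{m}\|v\|_{0,\Omega_{m}}^{2} + |v|_{h,\kappa}^{2},
\]
the task reduces to bounding $\kappa_{m}\|v\|_{0,\Omega_{m}}^{2}$ by a constant times $\sum_{T\subset\Omega_{m}}\kappa_{m}\|\nabla v\|_{0,T}^{2}$ on each subdomain $\Omega_{m}$.

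The key tool is the subdomain-wise Poincar\'e--Friedrichs inequality for piecewise $\mathbb{P}_{1}$ nonconforming functions, as in \cite{Dolejsi.V;Feistauer.M;Felcman.J1999,Brenner.S2003a}. For each floating subdomain $\Omega_{i}$ with $i\in\mathcal{I}$, the constraint $\int_{\Omega_{i}} v\,dx = 0$ built into the definition \eqref{vtilde} of $\widetilde{V}_{h}^{CR}$ is exactly what is required to apply a mean-zero Poincar\'e--Friedrichs inequality, yielding
\[
\|v\|_{0,\Omega_{i}}^{2} \lesssim \sum_{T\subset\Omega_{i}}\|\nabla v\|_{0,T}^{2},
\]
with a constant depending only on the geometry of $\Omega_{i}$. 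For each non-floating subdomain $\Omega_{m}$ (with $\mbox{meas}_{d-1}(\partial\Omega\cap\partial\Omega_{m})>0$), the homogeneous Dirichlet boundary condition satisfied (in the weak nonconforming sense, through the mean-value jump condition on boundary edges) provides the required zero-trace substitute so that a Friedrichs-type inequality with the same broken-gradient right-hand side applies. Multiplying each local inequality by $\kappa_{m}$ and summing gives $\|v\|_{0,\kappa}^{2} \lesssim |v|_{h,\kappa}^{2}$ with a constant independent of $\kappa$, hence $\|v\|_{h,\kappa} \lesssim |v|_{h,\kappa}$ for all $v\in\widetilde{V}_{h}^{CR}$.

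With this equivalence in hand, I would simply substitute $|v|_{h,\kappa}$ for $\|v\|_{h,\kappa}$ in the right-hand sides of \eqref{eqn:app} and \eqref{eqn:stab} of Lemma~\ref{lm:interpolation} to obtain \eqref{eqn:wapp} and \eqref{eqn:wstab}. The hidden constants remain independent of $\kappa$ and the mesh size, since both the Poincar\'e--Friedrichs constants (depending only on subdomain geometry) and the constants from Lemma~\ref{lm:interpolation} are $\kappa$-independent.

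The main obstacle is the non-standard Poincar\'e--Friedrichs step on each $\Omega_{m}$: one needs a version valid for broken $\mathbb{P}_{1}$ functions whose edge-jumps have zero mean, applied over a subdomain rather than over $\Omega$. This is available in the cited references, but it requires verifying that the zero-mean condition on $\Omega_{i}$ together with the continuity-in-mean of $v$ across interior edges of $\mathcal{T}_{h}$ restricted to $\Omega_{i}$ suffices to establish a subdomain Poincar\'e inequality with a geometry-only constant. All other steps—summation over subdomains, weighting by $\kappa_{m}$, and invocation of Lemma~\ref{lm:interpolation}—are then routine.
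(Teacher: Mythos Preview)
Your proposal is correct and follows essentially the same approach as the paper: apply the subdomain-wise Poincar\'e--Friedrichs inequality for nonconforming piecewise $\mathbb{P}_{1}$ functions (using the mean-zero constraint on floating subdomains and the Dirichlet condition on non-floating ones) to obtain $\|v\|_{0,\kappa}\lesssim |v|_{h,\kappa}$ for $v\in\widetilde{V}_{h}^{CR}$, and then invoke Lemma~\ref{lm:interpolation}. If anything, your write-up is more explicit than the paper's in distinguishing the floating and non-floating cases.
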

\begin{proof}
	By the definition of $\widetilde{V}_{h}^{CR}$,  any $v\in \widetilde{V}_{h}^{CR}$ satisfies the Poincar\'e-Friedrichs inequality on each subdomain (cf. \cite{Dolejsi.V;Feistauer.M;Felcman.J1999,Brenner.S2003a}):
	$$
		\int_{\Omega_{i}} |v|^{2} dx \lesssim \sum_{T\in \cT_{h}, T\subset \Omega_{i}}\int_{T} |\nabla v|^{2} dx,\quad  i=1, \cdots, M.
	$$
	Since $\kappa$ is piecewise constant on each subdomain, we have 
	$$
		\|v\|_{0,\kappa} \lesssim |v|_{h,\kappa}, \quad \forall v\in \widetilde{V}_{h}^{CR}.
	$$
	The conclusion then follows by Lemma~\ref{lm:interpolation} and the above inequality.
\end{proof}
%
%\begin{remark}\label{rk:nolog}
% In particular, for fixed $l$ the ratio $h_{l}/h$ is a fixed constant, then the approximation and
%  stability properties are:
%	\begin{eqnarray*}
%		\|(I-Q_{l}^{\kappa})v\|_{0,\kappa}   \lesssim h_{l}  |v|_{h,\kappa},\qquad
%		|Q_{l}^{\kappa} v|_{1,\kappa}  \lesssim  |v|_{h,\kappa}, \quad \forall v\in \widetilde{V}_{h}^{CR}.
%	\end{eqnarray*}
%\end{remark}

%%%%%%%%%%%%%%%%%%%%%%
\section{A Multigrid Preconditioner}
\label{sec:mg}
The action of standard multigrid $V$-cycle iterative operator 
$B:=B_{L+1}:V_{L+1} \mapsto V_{L+1}$ on a given $g\in V_{L+1}$ can be defined 
recursively by the following algorithm (cf. \cite{Bramble.J1993}):
\begin{algorithm}[$V$-cycle]
\label{alg:vcycle} 
Let $g_{L+1}=g$, and $B_0=A_0^{-1}.$ For $l= 1, \cdots, L+1,$ we define
recursively $B_l g_l$ for any $g_l\in V_{l}$ by the following three steps:
\begin{enumerate}
  \item Pre-smoothing : $w_1=R_l g_l;$
  \item Subspace correction: $w_2=w_1+ B_{l-1}Q_{l-1}(g_l - A_l w_1);$
  \item Post-smoothing: $B_lg_l:=w_2+R_l^*(g_l-A_l w_2).$
\end{enumerate}
\end{algorithm}
Here, $T^{*}$ is the adjoint operator of $T$ with respect to the energy inner product $a_{h}(\cdot, \cdot)$, that is, 
$$a_{h}(T^{*} v, w) := a_{h}(v, Tw),\quad \forall v, w\in V_{h}^{CR}.$$
In this algorithm,  $Q_{l} : L^{2}(\Omega) \to V_{l}$ is the
standard $L^{2}$ projection defined by:
$$
	(Q_{l} v, w_{l} ) = (v, w_{l}), \qquad \forall w_{l} \in V_{l}, \;\; (l=0, \cdots, L).
$$
The operator $R_{l}:V_{l} \to V_{l}$  for $l=0,\cdots, L+1$ is called a smoother. Here we consider $R_{l}$ to be the Gauss-Seidel smoother, which can be interpreted as an successive subspace correction algorithm based on a one-dimensional subspace decomposition $ V_{l} = \sum_{i=1}^{n_{l}} V_{l}^{i}$ with $V_{l}^{i} = {\rm span}\{ \phi_{l}^{i}\}_{i=1}^{n_{l}}$ (cf. \cite{Xu.J1992a}). Here $n_{l}$ denotes the dimension of the space $V_{l}.$
\begin{algorithm}[Gauss-Seidel]
\label{alg:fgs} 
Let $w_{0} = 0 \in V_{l}$. Then for any $g_l\in V_{l}$, we define $R_{l} g_{l} :=w_{n_{l}}$ as the result of the following loop: for $i=1, \cdots, n_{l}$ do
\begin{enumerate}
  \item Find $e_{i} \in V_{l}^{i}$ : $a_{h}(e_{i}, \phi_{l}^{i})=(g_{l}, \phi_{l}^{i}) - a_{h}(w_{i-1}, \phi_{l}^{i});$
  \item Update: $w_{i}:=w_{i-1}+e_{i}.$
\end{enumerate}
\end{algorithm}
For $i=1,\cdots, n_l,$ let
$P_{l}^{i}: V_{L+1}\to V_{l}^{i}$ be the orthogonal projection defined by
\begin{eqnarray*}
	a_{h}(P_{l}^{i} u,\phi_{l}^{i}) &=& a_{h}(u,\phi_{l}^{i}), \;\; \forall \phi_{l}^{i}\in V_{l}^{i}.
\end{eqnarray*} 
Using this projection operator, the solution $e_{i}$ to the first step in Algorithm~\ref{alg:fgs} is 
$$
	e_{i} = P_{l}^{i} (A_{l}^{-1} g - w_{i-1}).
$$
Hence, the following recursive relation holds from Algorithm~\ref{alg:fgs}
$$
	A_{l}^{-1} g_{l} - w_{i} = (I-P_{l}^{i}) (A_{l}^{-1} g_{l} - w_{i-1}), \quad  i = 1, \cdots, n_{l}.
$$
Therefore, the Gauss-Seidel smoother $R_l: V_{l}\to
V_{l}$ defined in Algorithm~\ref{alg:fgs}  can be written as
\begin{equation}
\label{eqn:rl}
	R_l=\left(I-\prod_{i=1}^{n_l} (I-P_{l}^{i})\right)A_l^{-1}.
\end{equation}
On each level $l=0,1, \cdots, L+1$, we define the Galerkin projection $P_{l}: V_{L+1} \to V_{l}$ as 
$$a_{h}(P_{l} u,v_{l}) = a_{h}(u,v_{l}), \;\; \forall v_{l}\in V_{l}.$$

The implementation of Algorithm~\ref{alg:vcycle} is almost identical to the implementation of standard multigrid $V$-cycle (cf. \cite{Briggs.W;Henson.V;McCormick.S2000}). We use standard prolongation and restriction matrices for conforming finite element spaces  $V_{l}$ for $l=0, \cdots, L$. The corresponding matrices between $V_{L}$ and $V_{L+1}$, are however different. The prolongation matrix on $V_{L}$ can be viewed as the matrix representation of the natural inclusion $\mathcal{I}_{L}: V_{L} \to V_{L+1},$ which is defined by
$$(\mathcal{I}_{L} v)(x) = \sum_{e\in \mathcal{E}_{h}} v(m_{e}) \psi_{e}(x),$$
where $\psi_{e}$ is the CR basis on the edge/face $e\in \mathcal{E}_{h}$ and $m_{e}$ is the barycener of $e$. Therefore, the prolongation matrix has the same sparsity pattern as the edge-to-vertex (in 2D), or face-to-vertex (in 3D) connectivity, and each nonzero entry in this matrix equals the constant $1/d$ where $d$ is the space dimension. The restriction matrix is simply the transpose  of the prolongation matrix.

Now we are in position to discuss convergence of the multigrid $V$-cycle algorithm. The analysis is based on the following identity \cite{Xu.J;Zikatanov.L2002}.
\begin{lemma}\label{lm:xzid}
  Let $W$ be a Hilbert space, and $W_{j}\subset W$ for $j=0,\cdots, J$
  be a number of closed subspaces satisfying $W=\sum_{j=0}^{J} W_{j}.$
  Let $P_j:W\to W_{j}$ be the orthogonal projection with respect to the
  inner product of $W.$ Then
  $$\left\|(I-P_J)\cdots(I-P_0)\right\|^2_{{\mathcal
  L}(W)}=1-\frac{1}{1+c_0},$$ where
  $$c_0=\sup_{\left\|v\right\|=1}\inf_{v=\sum_{j=0}^{J} v_j}\sum_{j=0}^{J} \left\|P_j\left(\sum_{k=j+1}^{J} v_k\right)\right\|^2.$$
\end{lemma}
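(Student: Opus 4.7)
The plan is to translate the operator norm of $E:=(I-P_J)\cdots(I-P_0)$ into a variational quantity over stable subspace decompositions and then recover $c_0$ through two Cauchy--Schwarz estimates. Set $E_{-1}=I$ and $E_j=(I-P_j)E_{j-1}$, so $E=E_J$. Since every $P_j$ is an orthogonal projection, $\|(I-P_j)w\|^2=\|w\|^2-\|P_jw\|^2$, and telescoping yields
\[
\|v\|^2-\|Ev\|^2=\sum_{j=0}^{J}\|P_jE_{j-1}v\|^2=:f(v),
\]
together with the dual decomposition $v-Ev=\sum_j P_jE_{j-1}v$; rearranging, one also obtains the crucial identity $\sum_{k\ge j}P_kE_{k-1}v=E_{j-1}v-Ev$. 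Because $\|E\|^2=1-\inf_{\|v\|=1}f(v)$, proving the lemma is equivalent to establishing $\sup_{\|v\|=1}\|v\|^2/f(v)=1+c_0$.

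For the upper bound $\|E\|^2\le c_0/(1+c_0)$, I fix $v$ and any decomposition $v=\sum_j v_j$ with $v_j\in W_j$ and write $\|v\|^2=(v,Ev)+(v,v-Ev)$. Expanding $(v,v-Ev)=\sum_{j,k}(v_k,P_jE_{j-1}v)$ and splitting the double sum according to $k\le j$ versus $k>j$, the $k\le j$ piece is reindexed and collapsed by $\sum_{k\ge j}P_kE_{k-1}v=E_{j-1}v-Ev$; the resulting $(v,Ev)$ term cancels with the one from the expansion. Using $P_jv_j=v_j$ together with the orthogonality $(P_jw,u)=(P_jw,P_ju)$ whenever $P_jw\in W_j$ to push the projector onto $E_{j-1}v$, one is left with
\[
\|v\|^2=\sum_j\Bigl(v_j+P_j\sum_{k>j}v_k,\;P_jE_{j-1}v\Bigr).
\]
Two Cauchy--Schwarz steps (inside each summand, then on the $\ell^2$ sum) give $\|v\|^4\le f(v)\sum_j\|v_j+P_j\sum_{k>j}v_k\|^2$. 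Expanding the last sum, the cross terms $2\sum_{j<k}(v_j,v_k)$ combine with $\sum_j\|v_j\|^2$ to reassemble $\|\sum_j v_j\|^2=\|v\|^2$, producing the clean simplification
\[
\sum_j\bigl\|v_j+P_j\sum_{k>j}v_k\bigr\|^2=\|v\|^2+\sum_j\bigl\|P_j\sum_{k>j}v_k\bigr\|^2.
\]
Minimizing over decompositions and normalizing $\|v\|=1$, the right-most sum can be made arbitrarily close to $g(v)\le c_0$, giving $1\le f(v)(1+c_0)$ and hence the desired bound on $\|E\|^2$.

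For the matching lower bound I would argue sharpness: equality in both Cauchy--Schwarz steps requires a single constant $\lambda$ and a decomposition such that $v_j+P_j\sum_{k>j}v_k=\lambda P_jE_{j-1}v$ for every $j$, which is an upper-triangular system solvable recursively from $j=J$ down to $j=0$; imposing $\sum_j v_j=v$ then fixes $\lambda$. Specializing to a (near-)maximizing eigenvector $v$ of $E^*E$ produces a decomposition for which $\|v\|^2\le(1+g(v))f(v)$ becomes an equality, forcing $g(v)=\|Ev\|^2/f(v)$ and in turn $c_0\ge\|E\|^2/(1-\|E\|^2)$, i.e., $\|E\|^2\ge c_0/(1+c_0)$.

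The main obstacle is the algebraic bookkeeping in the upper-bound step: the precise way the $k\le j$ contribution collapses via $\sum_{k\ge j}P_kE_{k-1}v=E_{j-1}v-Ev$ and annihilates the $(v,Ev)$ term, and how the two-stage Cauchy--Schwarz then simplifies to exactly $\|v\|^2+\sum\|P_j\sum_{k>j}v_k\|^2$ rather than to a bound containing extraneous stability terms $\sum\|v_j\|^2$. Once this identity is in hand, sharpness is a comparatively routine variational check.
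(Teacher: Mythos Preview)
The paper does not prove this lemma; it merely quotes the identity from \cite{Xu.J;Zikatanov.L2002}, so there is no in-paper argument to compare against. That said, your upper-bound direction ($\|E\|^2\le c_0/(1+c_0)$) is correct and matches the argument in the cited reference: the telescoping identity $\|v\|^2-\|Ev\|^2=\sum_j\|P_jE_{j-1}v\|^2$, the algebraic reduction to $\|v\|^2=\sum_j\bigl(v_j+P_j\sum_{k>j}v_k,\,P_jE_{j-1}v\bigr)$, the two Cauchy--Schwarz steps, and the key simplification $\sum_j\bigl\|v_j+P_j\sum_{k>j}v_k\bigr\|^2=\|v\|^2+\sum_j\bigl\|P_j\sum_{k>j}v_k\bigr\|^2$ all check out. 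This half is in fact the only direction the paper uses downstream.

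The lower-bound sketch has genuine gaps. First, a direction slip: your upper-bound computation already gives $c_0\ge\|E\|^2/(1-\|E\|^2)$, and that inequality is equivalent to $\|E\|^2\le c_0/(1+c_0)$, \emph{not} to $\|E\|^2\ge c_0/(1+c_0)$ as you claim. What must still be shown is $c_0\le\|E\|^2/(1-\|E\|^2)$, i.e.\ for \emph{every} unit vector $v$ one has to produce a decomposition with $\sum_j\bigl\|P_j\sum_{k>j}v_k\bigr\|^2\le\|E\|^2/(1-\|E\|^2)$; specializing to a maximizing eigenvector of $E^{*}E$ cannot cover this. Second, your triangular recursion $v_j+P_j\sum_{k>j}v_k=\lambda P_jE_{j-1}v$ is homogeneous of degree one in $\lambda$: each $v_j$ it produces equals $\lambda$ times a fixed vector depending only on $v$, so $\sum_j v_j=\lambda\,Wv$ for some linear map $W$, and ``imposing $\sum_j v_j=v$ to fix $\lambda$'' forces $v$ to be an eigenvector of $W$. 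You therefore do not obtain a valid decomposition for general $v$, and the sharpness argument remains incomplete.
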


Recall that from \eqref{eqn:nested}, the finite element spaces $\{V_{l}\}_{l=0}^{L+1}$ are nested. Then we have the following space decomposition (recall that $V_{L+1} = V_{h}^{CR}$):
\begin{equation}\label{space:decom2}
	V_h^{CR}= \sum_{l=0}^{L+1} V_{l}.
\end{equation}
Combining with \eqref{eqn:rl} for the smoother $R_{l}$, the multigrid $V$-cycle operator $B:=B_{L+1}$ in Algorithm~\ref{alg:vcycle} has the representation (cf. \cite[(3.4)]{Bramble.J1993})
\begin{equation}
\label{eqn:errprop}
I-BA=\left((I-P_0)\prod_{l=1}^{L+1}\prod_{i=1}^{n_l}(I-P_{l}^{i})\right)^*
\left((I-P_0)\prod_{l=1}^{L+1}\prod_{i=1}^{n_l}(I-P_{l}^{i})\right),
\end{equation}
and the error propagation of the multigrid $V$-cycle iteration has the form:
$$
	u -u_{k} = (I-BA)^{k}(u -u_{0}).
$$
We define the convergence rate $\rho:=\|I-BA\|_{A}$, then obviously the above iteration is convergent if $\rho <1.$ From \eqref{eqn:errprop}, we have the following simple relationship between the extreme eigenvalues of $BA$ with the convergence rate $\rho$:
\begin{proposition}
	\label{prop:eig}
	We have the following estimate  for the maximum and minimum eigenvalues of $BA$:
	\begin{equation}\label{eq:mgemax}
\lambda_{\max}(BA)\le 1,\quad\mbox{ and } \quad \lambda_{\min}(BA)  = 1-\rho.
\end{equation}
\end{proposition}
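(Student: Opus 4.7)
The plan is to exploit the symmetric product representation of the error operator in \eqref{eqn:errprop}. Setting
\[ E := (I-P_0)\prod_{l=1}^{L+1}\prod_{i=1}^{n_l}(I-P_{l}^{i}), \]
identity \eqref{eqn:errprop} reads $I-BA = E^*E$, where the adjoint is taken with respect to the energy inner product $(\cdot,\cdot)_A = a_h(\cdot,\cdot)$. The argument then splits cleanly into a self-adjointness observation and a non-negativity observation.

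First I would note that $BA$ is self-adjoint with respect to $(\cdot,\cdot)_A$. This is a consequence of the symmetric construction of Algorithm~\ref{alg:vcycle}: the paired pre- and post-smoothers $R_l$ and $R_l^*$, together with the exact coarse solve $B_0 = A_0^{-1}$, make $B$ an SPD operator, so that $(BAv,w)_A = (ABAv,w) = (v, BAw)_A$. In particular, $\sigma(BA)\subset \mathbb{R}$ and the spectra are related by $\sigma(I-BA) = \{\,1 - \lambda : \lambda \in \sigma(BA)\,\}$.

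Next, the factorization $I-BA = E^*E$ gives, for every $v \in V_{L+1}$,
\[ ((I-BA)v,v)_A \;=\; a_h(E^*Ev,v) \;=\; a_h(Ev, Ev) \;=\; \|Ev\|_A^2 \;\ge\; 0. \]
Hence $I-BA$ is positive semidefinite in the $A$-inner product, and every $\lambda\in\sigma(BA)$ satisfies $\lambda \le 1$, which proves $\lambda_{\max}(BA)\le 1$. Being $A$-self-adjoint and positive semidefinite, the operator $I-BA$ also has operator norm equal to its largest eigenvalue, so
\[ \rho \;=\; \|I-BA\|_A \;=\; \lambda_{\max}(I-BA) \;=\; 1 - \lambda_{\min}(BA), \]
and rearranging yields $\lambda_{\min}(BA) = 1-\rho$.

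The main (and essentially only) obstacle is making the $A$-self-adjointness of $BA$ sufficiently explicit: without it, both the identification $\rho = \lambda_{\max}(I-BA)$ and the bijection $\lambda \leftrightarrow 1-\lambda$ between the spectra of $BA$ and $I-BA$ would require more delicate justification. Once that is noted, the rest is just algebraic manipulation of the identity \eqref{eqn:errprop}.
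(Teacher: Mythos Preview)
Your proposal is correct and follows essentially the same route as the paper: both use the factorization $I-BA=E^{*}E$ from \eqref{eqn:errprop} to get $a_h((I-BA)v,v)=\|Ev\|_A^{2}\ge 0$, hence $\lambda_{\max}(BA)\le 1$, and then use the $A$-self-adjointness (and the just-established positive semidefiniteness) of $I-BA$ to identify $\rho=\|I-BA\|_A$ with $\lambda_{\max}(I-BA)=1-\lambda_{\min}(BA)$. The only cosmetic difference is that the paper writes the last step via the Rayleigh-quotient characterization $\rho=\sup_{v\neq 0} a_h((I-BA)v,v)/a_h(v,v)$ rather than via the spectral mapping $\sigma(I-BA)=1-\sigma(BA)$, but these are the same observation.
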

\begin{proof}
Obviously, for any $v\in V_{h}^{CR}$ we have $$a_{h}((I-BA)v, v) = \left\|\left((I-P_0)\prod_{l=1}^{L+1}\prod_{i=1}^{n_l}(I-P_{l}^{i})\right)v\right\|_{A}^{2}\ge 0.$$ Therefore, $a_{h}(BAv,v)\le a_{h}(v,v), \;\; \forall v\in V_{h}^{CR}$, which implies that
$ \lambda_{\max}(BA)\le 1.$

By definition of $\|I-BA\|_{A}$, we have
$$
	\rho = \left\|I-B A\right\|_A = \sup_{0\neq v\in V_{h}^{CR}} \frac{a_{h}((I-BA)v,v)}{a_{h} (v,v)} = 1-  \inf_{0\neq v\in V_{h}^{CR}} \frac{a_{h}(BAv,v)}{a_{h} (v,v)}.
$$
From this identity, it is obvious that
\begin{equation*}
  \lambda_{\min}(BA)=\inf_{0\neq v\in V_{h}^{CR}}\frac{a_{h}(BA v, v)}{a_{h}(v,v)}= 1-\rho.
\end{equation*}
This completes the proof.
\end{proof}
On the other hand, notice that $I-BA$ is self-adjoint with respect to the  inner
product $a_{h}(\cdot,\cdot),$ we have
\begin{equation*}
\left\|I-B A\right\|_A= \sup_{\|v\|_{A} =1} a_{h}((I-BA)v, v) =\left\|(I-P_0)\prod_{l=1}^{L+1}\prod_{i=1}^{n_l}(I-P_{l}^{i})\right\|_A^2.
\end{equation*}
From Lemma \ref{lm:xzid}, we deduce that
\begin{equation}
\label{eqn:mgxz}
\rho =\left\|I-BA\right\|_A=\frac{c_0}{1+c_0},\; \mbox{ with }\;
c_0=\sup_{|v|_{h,\kappa}=1}\inf_{v=v_0+\sum_{l=1}^{L+1}\sum_{i=1}^{n_l}
v_{l}^{i}} c(v),
\end{equation}
where
$$c(v)=|P_0(v-v_0)|_{h,\kappa}^2+\sum_{l=1}^{L+1}\sum_{i=1}^{n_l}\left|P_{l}^{i}\left(
\sum_{(k,j)>(l,i)}v_{k}^{j}\right)\right|_{h,\kappa}^2,$$
with $v_{k}^{j} \in {\rm span}\{\phi_{k}^{j}\}$. 
%By definition of $\|I-BA\|_{A}$, we have
%$$
%	\frac{c_0}{1+c_0} = \left\|I-B A\right\|_A = \sup_{v\in V_{h}^{CR}, v\neq 0} \frac{a_{h}((I-BA)v,v)}{a_{h} (v,v)}.
%$$
%From this identity, it is obvious that
%\begin{equation}\label{eq:lv}
%  \lambda_{\min}(BA)=\inf_{0\neq v\in V_{h}^{CR}}\frac{a_{h}(BA v, v)}{a_{h}(v,v)}\ge \frac{1}{1+c_0}.
%\end{equation}
In other words, we only need to estimate the upper bound of $c_0$ to obtain an upper bound of convergence rate $\rho$ of the multigrid $V$-cycle algorithm. By Proposition~\ref{prop:eig}, the upper bound of $\rho$ also provides a lower bound of minimum eigenvalue of $BA.$ 

For this purpose, we introduce the following lemma to bound $c(v).$
To simplify the notation, we set 
$Q_{L+1}^{\kappa}=I$, and $Q_{-1}^{\kappa}=0$. 
Then for any $v\in V_{L+1}$, we can decompose it as 
\begin{equation}
\label{eqn:vdecomp}
 v = \sum_{l=0}^{L+1} (Q_{l}^{\kappa} - Q_{l-1}^{\kappa}) v =\sum_{l=0}^{L+1} v_{l}, 
\quad\mbox{where} 
\quad v_l=(Q_{l}^{\kappa} - Q_{l-1}^{\kappa}) v.
\end{equation}
Clearly, $v_l\in V_l$ for $l =1, \cdots, (L+1)$ and $v_{0} = Q_{0}^{\kappa}
v\in V_0$. 

\begin{lemma}\label{lm:cv}
Given any $v\in V_{L+1}$ with the  decomposition \eqref{eqn:vdecomp}, we have
  \begin{equation}\label{eq:cv}
  c(v)\lesssim \sum_{l=0}^{L+1} \left|P_{l} v-Q_l^{\kappa} v\right|_{h,\kappa}^2+
\sum_{l=1}^{L+1}
h_l^{-2}\left\|(Q_l^{\kappa}-Q_{l-1}^{\kappa})v\right\|_{0,\kappa}^2.
\end{equation}
\end{lemma}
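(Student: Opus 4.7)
The plan is to split $c(v)$ into its coarse-level piece and its smoother piece, then carry out a line of argument parallel to the conforming multigrid analysis of Xu--Zhu. For the coarse piece, since $v_0 = Q_0^\kappa v \in V_0$ one has $P_0 v_0 = v_0$, so $|P_0(v-v_0)|_{h,\kappa}^2 = |P_0 v - Q_0^\kappa v|_{h,\kappa}^2$, which is exactly the $l=0$ term on the right-hand side of \eqref{eq:cv}.

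For the smoother sum, the key algebraic manipulation is to split the lexicographically ordered tail into a same-level part and a higher-level part,
\[
\sum_{(k,j)>(l,i)} v_k^j \;=\; \sum_{j>i} v_l^j \;+\; \sum_{k>l} v_k,
\]
and to recognize, via the telescoping decomposition \eqref{eqn:vdecomp}, that $\sum_{k>l} v_k = v - Q_l^\kappa v$. Because $V_l^i\subset V_l$, projections compose as $P_l^i = P_l^i P_l$, and since $Q_l^\kappa v \in V_l$ we have $P_l Q_l^\kappa v = Q_l^\kappa v$. Combining these identities gives
\[
P_l^i\!\Big(\sum_{(k,j)>(l,i)} v_k^j\Big) \;=\; P_l^i\!\Big(\sum_{j>i} v_l^j\Big) \;+\; P_l^i(P_l v - Q_l^\kappa v).
\]

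Applying $(a+b)^2\le 2a^2+2b^2$ and summing over $i$ produces two families of terms. The contribution from $P_l^i(P_l v - Q_l^\kappa v)$ is controlled by $\sum_i|P_l^i w|_{h,\kappa}^2 \lesssim |w|_{h,\kappa}^2$, which holds because each conforming $P_1$ (and each CR) basis function has $O(1)$ neighbors in the stiffness graph; this yields the first group of terms on the right of \eqref{eq:cv}. The remaining sum $\sum_i |P_l^i(\sum_{j>i}v_l^j)|_{h,\kappa}^2$ is the standard Gauss--Seidel sum; the target bound is $\sum_i|P_l^i(\sum_{j>i}v_l^j)|_{h,\kappa}^2 \lesssim |v_l|_{h,\kappa}^2$, after which the weighted inverse inequality $|v_l|_{h,\kappa}\lesssim h_l^{-1}\|v_l\|_{0,\kappa}$ (which holds elementwise because $\kappa$ is constant on each element of $\mathcal{T}_0$) delivers the second group of terms.

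The hardest step will be the Gauss--Seidel bound $\sum_i|P_l^i(\sum_{j>i}v_l^j)|_{h,\kappa}^2 \lesssim |v_l|_{h,\kappa}^2$ with a constant independent of $\kappa$ and of $h_l$. I would establish it by a strengthened Cauchy--Schwarz argument: using $|P_l^i u|_{h,\kappa}^2 = |a_h(u,\phi_l^i)|^2/a_h(\phi_l^i,\phi_l^i)$, only the $O(1)$ neighbors $\phi_l^j$ of $\phi_l^i$ contribute, so the double sum collapses locally and can be controlled by $|v_l|_{h,\kappa}^2$ via quasi-uniformity of $\mathcal{T}_l$ and the fact that $\kappa$ is elementwise constant. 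One technical subtlety is that on the finest level $l=L+1$ the basis is the CR edge/face basis rather than the nodal $P_1$ basis, but the finite-overlap structure is identical, so the same argument applies.
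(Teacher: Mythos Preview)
Your overall strategy matches the paper's proof: the same telescoping decomposition, the same splitting of the tail $\sum_{(k,j)>(l,i)} v_k^j = (v-Q_l^\kappa v) + \sum_{j>i} v_l^j$, and the same finite-overlap argument to bound $\sum_i |P_l^i(v-Q_l^\kappa v)|_{h,\kappa}^2 \lesssim |P_l v - Q_l^\kappa v|_{h,\kappa}^2$.

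The one step that fails is your intermediate target for the Gauss--Seidel piece. The bound
\[
\sum_{i} \Big|P_l^i\Big(\sum_{j>i} v_l^j\Big)\Big|_{h,\kappa}^2 \;\lesssim\; |v_l|_{h,\kappa}^2
\]
is \emph{not} true with a constant independent of $h_l$. Take $\kappa\equiv 1$ and $v_l\in V_l$ the nodal interpolant of a fixed smooth function; then $|v_l|_{h,\kappa}^2 = O(1)$, while a direct computation (or your own strengthened Cauchy--Schwarz argument) gives the left side $\simeq \sum_j |v_l^j|_{h,\kappa}^2 \simeq h_l^{-2}\|v_l\|_{0,\kappa}^2 = O(h_l^{-2})$. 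The issue is that $\sum_j |v_l^j|_{h,\kappa}^2$ is equivalent to $h_l^{-2}\|v_l\|_{0,\kappa}^2$, not to $|v_l|_{h,\kappa}^2$; the nodal decomposition is \emph{not} energy-stable. The fix is simply to delete the intermediate step: your locality argument already yields
\[
\sum_{i}\Big|P_l^i\Big(\sum_{j>i} v_l^j\Big)\Big|_{h,\kappa}^2
\;\le\; \sum_i \Big|\sum_{j>i} v_l^j\Big|_{h,\kappa,\Omega_{l,i}}^2
\;\lesssim\; \sum_j |v_l^j|_{h,\kappa}^2
\;\lesssim\; h_l^{-2}\|v_l\|_{0,\kappa}^2,
\]
going straight to the $L^2$ bound via the inverse inequality applied to each one-dimensional component $v_l^j$. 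This is precisely the route the paper takes.
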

\begin{proof}
  By the decomposition \eqref{eqn:vdecomp}, we can write $v\in V_{L+1}$ as  
  $$v=\sum_{l=0}^{L+1} v_l= v_0+\sum_{l=1}^{L+1}
\sum_{i=1}^{n_l} v_{l}^{i}$$ with $v_l=(Q_l^{\kappa}-Q_{l-1}^{\kappa}) v.$
Noticing that 
$$\sum_{(k,j)>(l,i)}v_{k}^{j}:= \sum_{k=l}^{L+1} \sum_{j=i+1}^{n_{k}} v_{k}^{j}=(v-Q_l^{\kappa} v)+\sum_{j=i+1}^{n_l}
v_{l}^{j},$$ we have
\begin{eqnarray*}
  &&\sum_{l=1}^{L+1}\sum_{i=1}^{n_l}\left|P_{l}^{i}\sum_{(k,j)>(l,i)}v_{k}^{j}\right|_{h,\kappa}^2
  =\sum_{l=1}^{L+1}\sum_{i=1}^{n_l}\left|P_{l}^{i}\sum_{(k,j)>(l,i)}v_{k}^{j}\right|_{h,\kappa,\Omega_{l,i}}^2\\
  &&\le 2\sum_{l=1}^{L+1}\sum_{i=1}^{n_l}\left(\left|P_{l}^{i}(v-Q_l^{\kappa}
  v)\right|_{h,\kappa,\Omega_{l,i}}^2+\left|P_{l}^{i}\sum_{j=i+1}^{n_l}
v_{l}^{j}\right|_{h,\kappa,\Omega_{l,i}}^2\right)\\
&&\le2\sum_{l=1}^{L+1}\sum_{i=1}^{n_l}\left(\left|P_{l}^{i}(v-Q_l^{\kappa}
v)\right|_{h,\kappa,\Omega_{l,i}}^2+\left|\sum_{j=i+1}^{n_l}
v_{l}^{j}\right|_{h,\kappa,\Omega_{l,i}}^2\right),
\end{eqnarray*}
where $\Omega_{l,i}:={\rm supp}(\phi_{l}^{i})$ and $|w|_{h, \kappa, \Omega_{l,i}}^{2} := \sum_{T\in \cT_{l}, T\subset \Omega_{l,i}} \int_{T} \kappa_{T} |\nabla w|^{2} dx$.
By quasi-uniformity of the triangulations, it is not difficult to verify that
$$\sum_{i=1}^{n_l}\left|P_{l}^{i}(v-Q_l^{\kappa}
v)\right|_{h,\kappa,\Omega_{l,i}}^2\lesssim \left|P_{l} v-Q_l^{\kappa}
v\right|^2_{h,\kappa}$$ and by inverse inequality and quasi-uniformity of the triangulations
\begin{eqnarray*}
  \sum_{i=1}^{n_l}\left|\sum_{j=i+1}^{n_l}
  v_{l}^{j}\right|_{h,\kappa,\Omega_{l,i}}^2&\lesssim&
  \sum_{i=1}^{n_l}\sum_{j=i+1}^{n_l}
  h_l^{-2}\left\|v_{l}^{j}\right\|_{0,\kappa,\Omega_{l,i}}^2 \lesssim h_l^{-2}\left\|v_l\right\|_{0,\kappa}^2\\
  &=&  h_l^{-2}\left\|(Q_l^{\kappa}-Q_{l-1}^{\kappa})v\right\|_{0,\kappa}^2.
\end{eqnarray*}
Therefore, we get
$$c(v)\lesssim \sum_{l=0}^{L+1} \left|P_{l} v-Q_l^{\kappa} v\right|_{h,\kappa}^2+
\sum_{l=1}^{L+1} h_l^{-2}\left\|(Q_l^{\kappa}-Q_{l-1}^{\kappa})v\right\|_{0,\kappa}^2.$$
This completes the proof.
\end{proof}

Based on this lemma, in order to estimate the convergence rate of
the multigrid algorithm, we only need the stability and
approximation properties of the interpolation operators
$Q_l^{\kappa}$ ($l=0, \cdots, L$). Now we are in position to give an estimate on the convergence rate $\rho$ of the multigrid $V$-cycle Algorithm~\ref{alg:vcycle}.
\begin{theorem}
\label{thm:mgv}
	Let $B$ be the multigrid $V$-cycle iterator defined in Algorithm~\ref{alg:vcycle}. Then the convergence rate $\rho = \|I-BA\|_{A}$  satisfies
	\begin{equation}
	\label{eqn:rho}
		\rho \le 1- \frac{1}{1+ C_{0} \mathcal{J}(\kappa) L^{2}},
	\end{equation}
	where $C_{0}$ is a constant independent of the coefficient $\kappa$ and the mesh size $h$  (or number of levels  $L$).
	Moreover, the condition number $\mathcal{K}(BA)$ satisfies
	\begin{equation}
		\label{eqn:cond}
			\mathcal{K}(BA) \le 1+ C_{0} \mathcal{J}(\kappa) L^{2}.
	\end{equation}
\end{theorem}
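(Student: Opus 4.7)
The plan is to combine the identity \eqref{eqn:mgxz} for $\rho$ with Lemma~\ref{lm:cv} and the approximation/stability properties of $Q_l^\kappa$ from Corollary~\ref{cor:inter-jmp}. Specifically, for any $v\in V_{L+1}$ with $|v|_{h,\kappa}=1$, I will use the decomposition \eqref{eqn:vdecomp} as the admissible choice in the infimum defining $c_0$, so that by Lemma~\ref{lm:cv}
\[
 c_0 \;\lesssim\; \sup_{|v|_{h,\kappa}=1}\Bigl(\sum_{l=0}^{L+1} |P_l v - Q_l^\kappa v|_{h,\kappa}^2 + \sum_{l=1}^{L+1} h_l^{-2}\|(Q_l^\kappa - Q_{l-1}^\kappa)v\|_{0,\kappa}^2\Bigr).
\]

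For the first sum, I would use that $P_l$ is the energy projection so $|v - P_l v|_{h,\kappa}\le |v - Q_l^\kappa v|_{h,\kappa}$, and since $Q_l^\kappa v\in V_l$ is conforming, $|Q_l^\kappa v|_{h,\kappa}=|Q_l^\kappa v|_{1,\kappa}$. The triangle inequality together with the stability bound \eqref{eqn:jstab} then gives
\[
 |P_l v - Q_l^\kappa v|_{h,\kappa} \;\le\; 2\bigl(|v|_{h,\kappa} + |Q_l^\kappa v|_{1,\kappa}\bigr) \;\lesssim\; \mathcal{J}^{1/2}(\kappa)\,|\log(2h_l/h)|^{1/2}\,|v|_{h,\kappa}.
\]
For the second sum, I would split $(Q_l^\kappa - Q_{l-1}^\kappa)v = (I-Q_{l-1}^\kappa)v - (I-Q_l^\kappa)v$ and apply the approximation estimate \eqref{eqn:japp} (with the convention $Q_{L+1}^\kappa = I$), yielding
\[
 h_l^{-2}\|(Q_l^\kappa - Q_{l-1}^\kappa)v\|_{0,\kappa}^2 \;\lesssim\; \mathcal{J}(\kappa)\,|\log(2h_l/h)|\,|v|_{h,\kappa}^2,
\]
after using $h_{l-1}\simeq h_l$ from the uniform refinement assumption.

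Using $h_l\simeq 2^{L-l}h$ so that $|\log(2h_l/h)|\simeq L-l+1$, both sums telescope like $\sum_{l=0}^{L+1}(L-l+1)\simeq L^2$, giving $c_0\lesssim \mathcal{J}(\kappa)L^2$. From \eqref{eqn:mgxz} this immediately yields
\[
 \rho \;=\; \frac{c_0}{1+c_0} \;=\; 1-\frac{1}{1+c_0} \;\le\; 1-\frac{1}{1+C_0\mathcal{J}(\kappa)L^2},
\]
which is \eqref{eqn:rho}. Finally, Proposition~\ref{prop:eig} gives $\lambda_{\max}(BA)\le 1$ and $\lambda_{\min}(BA)=1-\rho\ge 1/(1+C_0\mathcal{J}(\kappa)L^2)$, so $\mathcal{K}(BA)=\lambda_{\max}/\lambda_{\min}\le 1+C_0\mathcal{J}(\kappa)L^2$, proving \eqref{eqn:cond}.

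The main technical obstacle is really bookkeeping: one must verify that the logarithmic factors $|\log(2h_l/h)|$ telescope cleanly into an $L^2$ factor, and that the $\mathcal{J}(\kappa)$ dependence from Corollary~\ref{cor:inter-jmp} (rather than the subspace $\widetilde V_h^{CR}$ version in Corollary~\ref{cor:P}) cannot be avoided when working on the full space $V_h^{CR}$. The rest is a straightforward combination of the Xu--Zikatanov identity, Lemma~\ref{lm:cv}, and the stability/approximation of $Q_l^\kappa$.
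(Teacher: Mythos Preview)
Your proposal is correct and follows essentially the same route as the paper: combine the Xu--Zikatanov identity \eqref{eqn:mgxz} with Lemma~\ref{lm:cv} and the decomposition \eqref{eqn:vdecomp}, bound the two sums on the right of \eqref{eq:cv} using the stability \eqref{eqn:jstab} and approximation \eqref{eqn:japp} from Corollary~\ref{cor:inter-jmp}, sum the logarithmic factors to obtain $c_0\lesssim \mathcal{J}(\kappa)L^2$, and then invoke Proposition~\ref{prop:eig} for the condition number. The only cosmetic difference is that the paper bounds the first term directly via $|P_l v - Q_l^\kappa v|_{h,\kappa}=|P_l(v-Q_l^\kappa v)|_{h,\kappa}\le |v|_{h,\kappa}+|Q_l^\kappa v|_{h,\kappa}$, without your detour through the minimizing property of $P_l$; this yields the same estimate without the extra factor of~$2$.
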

\begin{proof}
 	Given any $v\in V_h^{CR},$ we make use of Corollary~\ref{cor:inter-jmp} to bound the right hand side of \eqref{eq:cv}.  For the first term, by the stability \eqref{eqn:jstab} of $Q_l^{\kappa}$  we have, for $l =0, 1, \cdots, L$
  \begin{eqnarray*}
    \left|P_{l} v-Q_l^{\kappa} v\right|_{h,\kappa}\le \left|v\right|_{h,\kappa}+\left|Q_l^{\kappa} v\right|_{h,\kappa}
    \lesssim \left(1+ \mathcal{J}^{\frac{1}{2}} (\kappa)\left|\log h_l\right|^{\frac{1}{2}}\right)\left|v\right|_{h,\kappa}.
  \end{eqnarray*}
 Notice that $Q_{L+1}^{\kappa} = I$, we obtain that
\begin{eqnarray*}
    \sum_{l=0}^{L+1}\left|P_{l} v-Q_l^{\kappa} v\right|_{h,\kappa}^2 &=&\sum_{l=0}^{L}\left|P_{l} v-Q_l^{\kappa} v\right|_{h,\kappa}^2 + \left|P_{L+1} v- v\right|_{h,\kappa}^2\\
    &\lesssim& \mathcal{J}(\kappa)\left(\sum_{l=0}^{L}\left|\log
    h_l\right|\right)\left|v\right|_{h,\kappa}^2\lesssim \mathcal{J}(\kappa)L^{2}\left|v\right|_{h,\kappa}^2.
\end{eqnarray*}
For the second term in the right hand side of \eqref{eq:cv}, by the approximation property \eqref{eqn:japp} of
$Q_{l-1}^{\kappa} \;(l=1,\cdots,L+1),$
\begin{eqnarray*}
  \left\|Q_l^{\kappa} v-Q_{l-1}^{\kappa} v\right\|_{0,\kappa}
    &\le&\left\|v-Q_{l}^{\kappa} v\right\|_{0,\kappa}+\left\|v-Q_{l-1}^{\kappa} v\right\|_{0,\kappa}\\
    &\lesssim& \mathcal{J}^{\frac{1}{2}}(\kappa)h_{l-1} \left|\log h_{l-1}\right|^{\frac{1}{2}}\left|v\right|_{h,\kappa}.
\end{eqnarray*}
Hence we have
$$\sum_{l=1}^{L+1}
h_l^{-2}\left\|(Q_l^{\kappa}-Q_{l-1}^{\kappa})v\right\|_{0,\kappa}^2\lesssim
\mathcal{J}(\kappa)\left(\sum_{l=1}^{L+1} |\log
h_{l-1}|\right)\left|v\right|_{h,\kappa}^2\lesssim \mathcal{J}(\kappa) L^2
\left|v\right|_{h,\kappa}^2.$$ 
Therefore, we obtain $c_0\le C_{0}\mathcal{J}(\kappa) L^2$ for some constant $C_{0}$ independent of coefficient $\kappa$ and meshsize $h$. The estimate \eqref{eqn:rho} then follows by the identity~\eqref{eqn:mgxz}.
Finally, the condition number estimate follows by \eqref{eq:mgemax} in Proposition~\ref{prop:eig} and \eqref{eqn:rho}.
\end{proof}

From Theorem~\ref{thm:mgv}, we conclude that the convergence rate of the multigrid $V$-cycle iteration is deteriorated by the jump of the coefficients rapidly. This phenomenon is justified also by the numerical experiment in Section~\ref{sec:num}. On the other hand, we can show that this deterioration due to the large jump in coefficient is only limited to a few fixed number of small eigenvalues in the preconditioned system. Therefore, the asymptotic convergence rate of the PCG algorithm is nearly uniform, as stated in the following theorem.
\begin{theorem}\label{thm:vp} 
Let $B$ be the multilevel preconditioner defined by Algorithm~\ref{alg:vcycle}. The effective condition number $\mathcal{K}_{m_{0}} (BA)$ satisfies 
\begin{equation}
\label{eqn:effcond}
	\mathcal{K}_{m_{0}}(BA) \le C_{1}^{2} L^{2},
\end{equation} 
where $m_{0} = \#\mathcal{I}$ is the number of floating subdomains (cf. \eqref{def:I}).  Therefore, the PCG algorithm has the following convergence rate estimate:
	\begin{equation*}
\frac{\|u-u_k\|_{A}}{\|u-u_{0}\|_{A}} \le 2(C_{0} \mathcal{J}(\kappa) L^{2})^{m_0}
\left(\frac{C_{1}L-1}{C_{1} L+1}\right)^{k-{m_0}}\;.
\end{equation*}
\end{theorem}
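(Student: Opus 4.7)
The plan is to combine the upper bound $\lambda_{\max}(BA)\le 1$ from Proposition \ref{prop:eig} with a jump-independent lower bound on $\lambda_{m_0+1}(BA)$. Setting $N=\dim V_h^{CR}$, the Courant--Fischer min--max principle together with the fact that $\dim \widetilde{V}_h^{CR}=N-m_0$ gives
\[
\lambda_{m_0+1}(BA) \;=\; \max_{\substack{S\subset V_h^{CR}\\ \dim S=N-m_0}}\; \min_{0\ne v\in S}\frac{(BAv,v)_A}{(v,v)_A} \;\ge\; \inf_{0\ne v\in \widetilde{V}_h^{CR}}\frac{(BAv,v)_A}{(v,v)_A},
\]
so it suffices to bound the Rayleigh quotient of $BA$ from below on the subspace $\widetilde{V}_h^{CR}$ by $1/(1+CL^2)$.

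To achieve this, I would repeat the argument of Theorem \ref{thm:mgv} with $v$ restricted to $\widetilde{V}_h^{CR}$, but invoke the jump-robust interpolation estimates \eqref{eqn:wapp}--\eqref{eqn:wstab} of Corollary \ref{cor:P} in place of \eqref{eqn:japp}--\eqref{eqn:jstab}. Decomposing $v=\sum_{l=0}^{L+1}(Q_l^\kappa-Q_{l-1}^\kappa)v$ and invoking Lemma \ref{lm:cv}, both sums on the right-hand side of \eqref{eq:cv} can now each be bounded by $CL^2|v|_{h,\kappa}^2$ \emph{without} a factor of $\mathcal{J}(\kappa)$; the logarithmic terms still telescope to $\sum_{l=0}^{L}|\log h_l|\lesssim L^2$ exactly as in Theorem \ref{thm:mgv}. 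Hence $c(v)\lesssim L^2 |v|_{h,\kappa}^2$ for this decomposition of any $v\in\widetilde{V}_h^{CR}$. The pointwise form of the Xu--Zikatanov identity implicit in the proof of Lemma \ref{lm:xzid} (see \cite{Xu.J;Zikatanov.L2002}), together with the representation \eqref{eqn:errprop}, then yields
\[
((I-BA)v,v)_A \;=\; \Big\|(I-P_0)\prod_{l=1}^{L+1}\prod_{i=1}^{n_l}(I-P_l^i)v\Big\|_A^2 \;\le\; \frac{c(v)}{|v|_A^2+c(v)}\,|v|_A^2 \;\le\; \frac{CL^2}{1+CL^2}\,|v|_A^2,
\]
which gives $\lambda_{m_0+1}(BA)\ge 1/(1+CL^2)$ and thus \eqref{eqn:effcond}. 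The claimed PCG rate then follows by substituting \eqref{eqn:cond} (to bound $\mathcal{K}(BA)-1$) and \eqref{eqn:effcond} (to bound $\sqrt{b/a}$) into the sharpened convergence estimate \eqref{eqn:CG} with $m=m_0$, $b=\lambda_{\max}(BA)\le 1$, $a=\lambda_{m_0+1}(BA)\ge 1/(C_1^2 L^2)$.

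The step I expect to require the most care is the pointwise version of the Xu--Zikatanov identity. Lemma \ref{lm:xzid} is stated as an operator-norm equality on the full space $V_h^{CR}$, whereas here I need a Rayleigh-quotient bound restricted to $\widetilde{V}_h^{CR}$, a subspace that is not invariant under any of the projections $P_0$ or $P_l^i$, and therefore not invariant under the iteration $I-BA$. The saving grace is that Lemma \ref{lm:cv} already produces a bound on $c(v)$ that is pointwise in $v$; the key inequality $\|Ev\|_A^2\le c(v)/(|v|_A^2+c(v))\cdot |v|_A^2$ extracted from the interior of the proof of Lemma \ref{lm:xzid} holds for any individual $v$ and any admissible decomposition of it, so no invariance of $\widetilde{V}_h^{CR}$ under the iteration is needed and the argument goes through as a purely pointwise Rayleigh-quotient estimate.
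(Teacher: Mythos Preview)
Your proposal is correct and follows essentially the same route as the paper: restrict to $\widetilde{V}_h^{CR}$, bound $c(v)$ via Lemma~\ref{lm:cv} using the jump-free estimates of Corollary~\ref{cor:P} in place of Corollary~\ref{cor:inter-jmp}, and then pass from $\tilde c_0\lesssim L^2$ to $\lambda_{m_0+1}(BA)\ge 1/(1+\tilde c_0)$ by the min--max principle. Your explicit discussion of why the Xu--Zikatanov bound can be applied on the non-invariant subspace $\widetilde{V}_h^{CR}$ (via the pointwise inequality for individual $v$) is more careful than the paper, which simply writes the identity \eqref{eqn:mgxz} restricted to $\widetilde{V}_h^{CR}$ without further comment; the underlying justification is the same.
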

\begin{proof}
	To obtain the effective condition number, we restrict ourself in the subspace  $\tilde{V}_h^{CR} \subset V_{h}^{CR}$ defined by \eqref{vtilde}. By the identity~\eqref{eqn:mgxz}, we
have
$$\|I-BA\|_{A,{\tilde{V}_h^{CR}}}=\left\|(I-P^{0})\prod_{l=1}^{L+1}\prod_{i=1}^{n_l}(I-P_{l}^{i})\right\|_{A,\tilde{V}_h^{CR}}^2
=\frac{\tilde{c}_0}{1+\tilde{c}_0},$$ with $$\tilde{c}_0=\sup_{v\in
\tilde{V}_h^{CR}}\inf_{v=v_0+\sum_{l=1}^{L+1}\sum_{i=1}^{n_l}v_{l}^{i}}
c(v).$$ 
Given any $ v\in \tilde{V}_h^{CR},$ we make use of Corollary~\ref{cor:P} to bound the right hand side of \eqref{eq:cv}.  For the first term, by the stability \eqref{eqn:wstab} of $Q_l^{\kappa}$  we have, for $l =0, 1, \cdots, L$
  \begin{eqnarray*}
    \left|P_{l} v-Q_l^{\kappa} v\right|_{h,\kappa}\le \left|v\right|_{h,\kappa}+\left|Q_l^{\kappa} v\right|_{h,\kappa}
    \lesssim \left(1+ \left|\log h_l\right|^{\frac{1}{2}}\right)\left|v\right|_{h,\kappa}.
  \end{eqnarray*}
 Notice that $Q_{L+1}^{\kappa} = I$, we obtain that
\begin{eqnarray*}
    \sum_{l=0}^{L+1}\left|P_{l} v-Q_l^{\kappa} v\right|_{h,\kappa}^2 &=&\sum_{l=0}^{L}\left|P_{l} v-Q_l^{\kappa} v\right|_{h,\kappa}^2 + \left|P_{L+1} v- v\right|_{h,\kappa}^2\\
    &\lesssim&\left(\sum_{l=0}^{L}\left|\log
    h_l\right|\right)\left|v\right|_{h,\kappa}^2\lesssim L^{2}\left|v\right|_{h,\kappa}^2.
\end{eqnarray*}
For the second term in the right hand side of \eqref{eq:cv}, by the approximation property \eqref{eqn:wapp} of
$Q_{l-1}^{\kappa} \;(l=1,\cdots,L+1),$
\begin{eqnarray*}
  \left\|Q_l^{\kappa} v-Q_{l-1}^{\kappa} v\right\|_{0,\kappa}
    &\le&\left\|v-Q_{l}^{\kappa} v\right\|_{0,\kappa}+\left\|v-Q_{l-1}^{\kappa} v\right\|_{0,\kappa}\\
    &\lesssim& h_{l-1} \left|\log h_{l-1}\right|^{\frac{1}{2}}\left|v\right|_{h,\kappa}.
\end{eqnarray*}
Hence we have
$$\sum_{l=1}^{L+1}
h_l^{-2}\left\|(Q_l^{\kappa}-Q_{l-1}^{\kappa})v\right\|_{0,\kappa}^2\lesssim
\left(\sum_{l=1}^{L+1} |\log
h_{l-1}|\right)\left|v\right|_{h,\kappa}^2\lesssim L^2
\left|v\right|_{h,\kappa}^2.$$ 
Therefore, we obtain that  $\tilde{c}_0\lesssim L^2.$ Then by mini-max Theorem (cf. \cite[Theorem 8.1.2]{Golub.G;Van-Loan.C1996}) we obtain
\begin{eqnarray*}
  \lambda_{m_0+1}(BA)&\ge&\min_{0\neq v\in
    \tilde{V}_h^{CR}}\frac{a(BAv,v)}{a(v,v)} =  \frac{1}{1+\tilde{c}_0}
    \gtrsim L^{-2}.
\end{eqnarray*}
Together with \eqref{eq:mgemax},  we get the estimate on the effective condition number of $BA$:
$$\mathcal{K}_{m_0}(BA)\le C_{1}^{2} L^2,$$
for some constant $C_{1}$ independent of the coefficient $\kappa$ and mesh size $h$ (or levels $L$). The convergence rate estimate of the PCG algorithm follows by \eqref{eqn:cond}, \eqref{eqn:effcond} and \eqref{eqn:CG}. 
This completes the proof.
\end{proof}
\begin{remark}
	We remark that $m_{0} = \#\mathcal{I}$ gives an upper bound of the total number small eigenvalue. In some cases, the number of bad eigenvalues might be less than $m_{0}$, as one can observe from the numerical tests. Since $m_{0}$ is a fixed integer depending only on the distribution of the coefficient, the asymptotic convergence rate of the PCG algorithm is dominated by $\frac{C_{1}L-1}{C_{1} L+1}$, which is independent of the coefficient.
\end{remark}
%
%\begin{theorem}\label{th:mgcg} Let $u\in V_{h}^{CR}$ be the exact solution to equation
%\eqref{eqn:cr} and $u_k\in V_{h}^{CR}, \; k=0,1,2, \dots$ be the solution
%sequence of the MGCG algorithm. Then
%we have
%\begin{eqnarray*}
%\frac{\|u-u_k\|_A}{\|u-u_0\|_A} &\le& 2
%\left(\frac{C_0}{h}-1\right)^{m_0}\left(1-\frac{2}{C_1|\log h|+1}\right)^{k-m_0},
%\;\mbox{ for } k\ge m_0
%\end{eqnarray*}
%where $C_0$ and $C_1$ are constants independent of coefficients and meshsize. Moreover, given a tolerance $\varepsilon<1,$ the number of iterations needed for $\frac{\|u-u_k\|_A}{\|u-u_0\|_A}<\varepsilon$ satisfies
%$$k\ge m_0+\left(\log\left(\frac{2}{\varepsilon}\right)+m_0\log\left(\frac{C_0}{h}\right)\right)/c_0(h)$$
%where $c_0(h)=\log\left(\frac{C_l |\log h|+1}{C_l |\log h|-1}\right).$
%\end{theorem}
%\begin{proof}
%  By Lemma \ref{lm:vc}, obviously we have $\kappa(BA)\le c_d(h).$
%  The conclusion follows directly by Lemma \ref{lm:vp} and inequality (\ref{eq:pcgf}).
%\end{proof}

%%%%%%%%%%%%%%%%%%%%%%%
%%%%%%%%%%%%%%%%%%%%%%%%
\section{Numerical Results}
\label{sec:num}
In this section, we present some numerical experiments in both 2D and 3D domains to justify the performance of the multigrid $V$-cycle preconditioner in Algorithm \ref{alg:vcycle}. 

In the first example, we consider the model problem \eqref{eqn:model} in the square $\Omega = [-1,1]^{2}$ with coefficients $\kappa =1$ in the subdomain $[-0.5,1]^{2}\cup [0, 0.5]^{2}$, and $\kappa = \varepsilon$ for the remaining subdomain, where $\varepsilon$ varies (see Figure~\ref{fig:domain2d}). We start with a structured initial triangulation on level 0 with mesh size $h = 2^{-1}$ which resolves the jump interface of the coefficient. Then on each level, we uniformly refine the mesh by subdividing each element into four congruent ones. In this example, we use 1 forward/backward Gauss-Seidel iteration as pre/post smoother in the multigrid preconditioner, and the stopping criteria of the PCG algorithm is $\|r_{k}\| / \|r_{0}\| <10^{-7}$ where $r_{k}= f-Au_{k}$ is the the residual at $k$-th iteration.

Figure~\ref{fig:eig2d} shows the eigenvalue distribution of the multigrid $V$-cycle preconditioned system $BA$ when $h=2^{-5}$ and $\varepsilon = 10^{-5}.$ As we can see from this figure, there is only one small eigenvalue deteriorated by the coefficient and mesh size.  This is different from the conforming case. As we know in the conforming case (cf.  \cite[Remark 5.2]{Xu.J;Zhu.Y2008}), there will be no such extremely small eigenvalue. However, we still observe a small eigenvalue in Figure~\ref{fig:eig2d} in this nonconforming case.
\begin{figure}[htbp]
\centering
	\parbox{0.45\textwidth}{
       \includegraphics[width=0.45\textwidth]{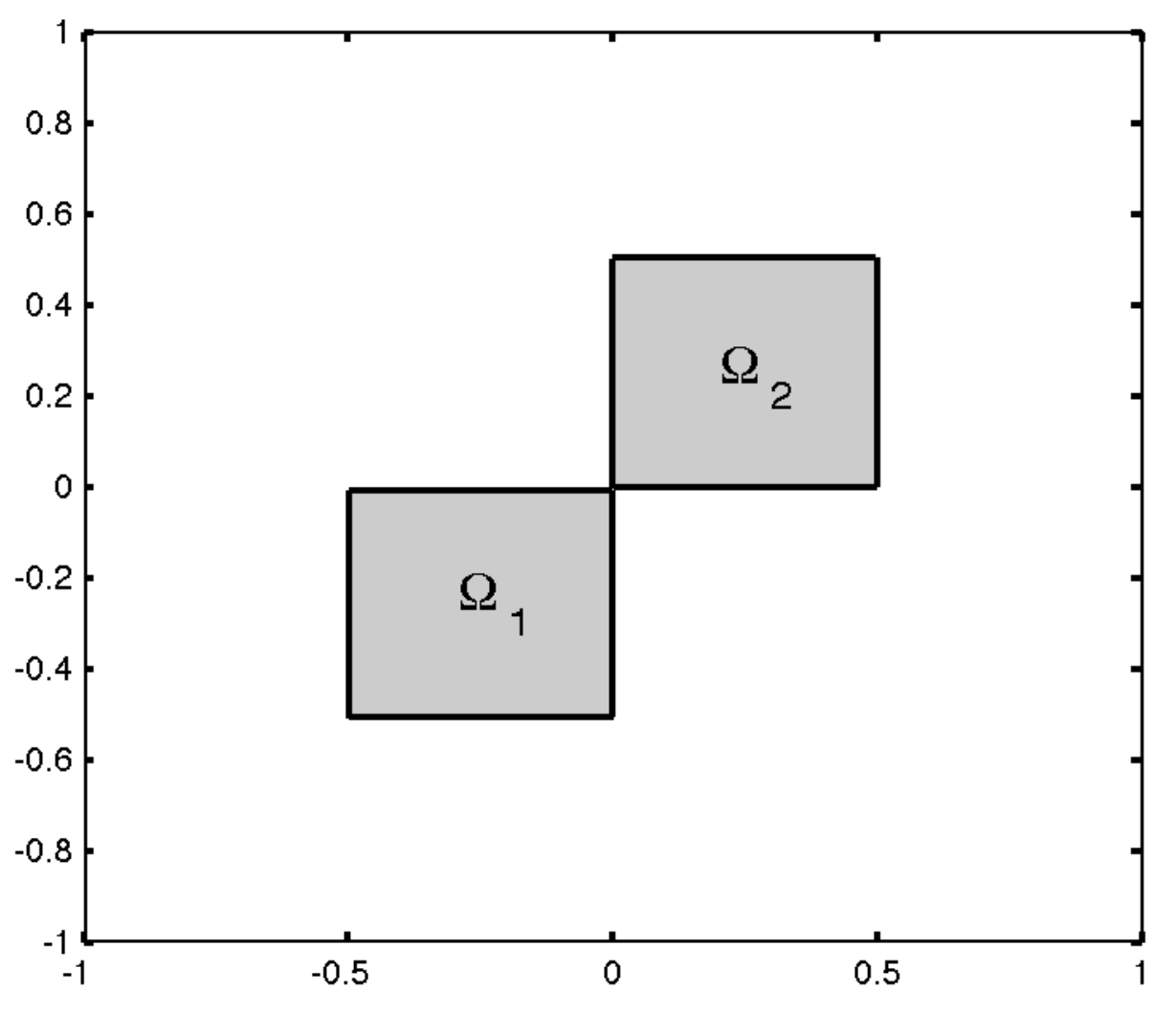}
       \caption{2D Computational Domain}
       \label{fig:domain2d}}
       \quad
       \begin{minipage}{0.45\textwidth}
       \includegraphics[width=0.99\textwidth]{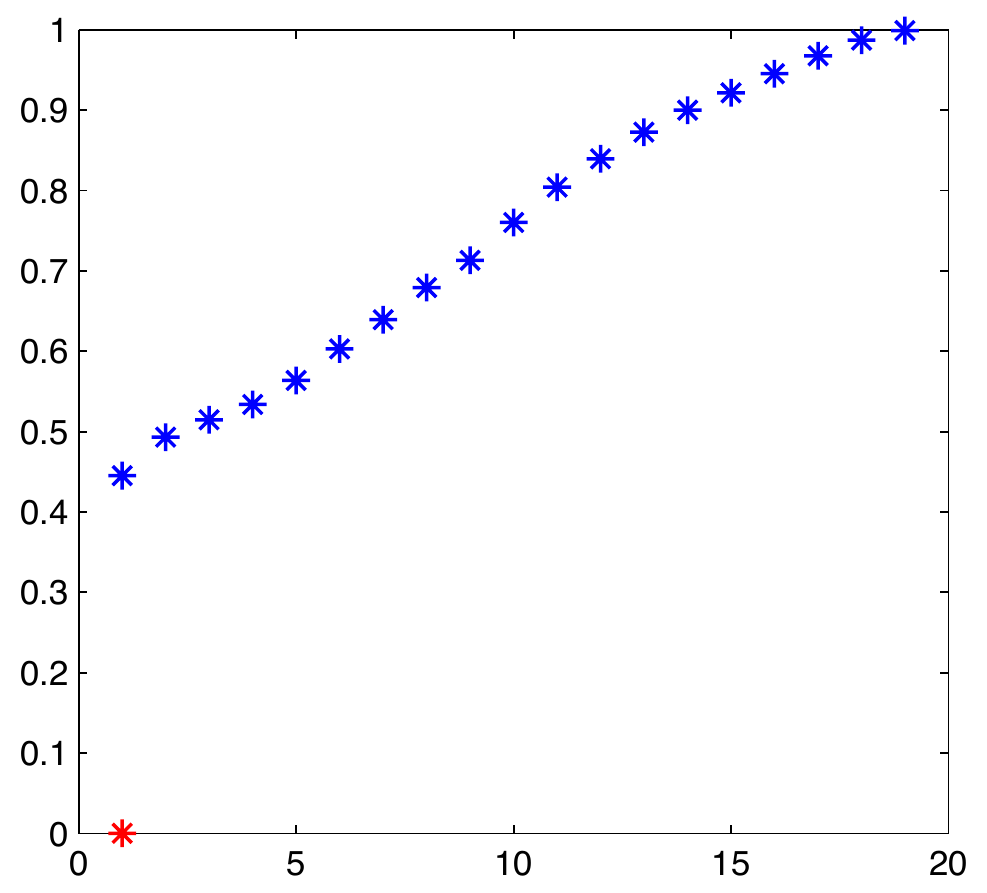}
       \caption{Eigenvalue Distribution of $BA$ in 2D}
       \label{fig:eig2d}
       \end{minipage}
\end{figure}

Table \ref{tab:2d} shows the estimated condition number $\mathcal{K}$ (number of PCG iterations) and the effective condition number $\mathcal{K}_{1}$ of the multigrid preconditioned system on $V_{h}^{CR}$. As we can see from this table, although the condition number is deteriorated due to the jumps in the coefficient and mesh size, the corresponding effective condition number is nearly uniform, which coincides with the theory. 
\begin{table}[htbp]
	\begin{tabular}{c|c||c|c|c|c|c}
\hline
{$\varepsilon$} & levels &  0         & 1        & 2        & 3        & 4 \\
%\cline{2-7}
%&  $h$   & $2^{-1}$  & $2^{-2}$   & $2^{-3}$  & $2^{-4}$  & $2^{-5}$\\
\hline\hline
\multirow{2}{*}{$1$}
 & $\mathcal{K}$ &  1.65 (8)&  1.83 (10)&   1.9 (10)&   1.9 (10)&  1.89 (10)\\
 & $\mathcal{K}_{1}$ &  1.44 &  1.78 &  1.77 &  1.78 &  1.76 \\\hline
\multirow{2}{*}{$10^{-1}$}
 & $\mathcal{K}$ &  3.78 (10)&  3.69 (11)&  3.76 (12)&  3.79 (12)&  3.88 (12)\\
 & $\mathcal{K}_{1}$ &  1.89 &  1.87 &  1.93 &  1.92 &  1.95 \\\hline
\multirow{2}{*}{$10^{-2}$}
 & $\mathcal{K}$ &  23.4 (12)&  23.6 (13)&  24.6 (13)&  25.1 (14)&    26 (15)\\
 & $\mathcal{K}_{1}$ &  2.15 &  1.96 &  1.99 &  1.97 &  2.24 \\\hline
\multirow{2}{*}{$10^{-3}$}
 & $\mathcal{K}$ &   218 (13)&   223 (14)&   232 (15)&   238 (16)&   246 (16)\\
 & $\mathcal{K}_{1}$ &  2.19 &  1.98 &     2 &  1.98 &  2.29 \\\hline
\multirow{2}{*}{$10^{-4}$}
 & $\mathcal{K}$ & 2.17e+03 (14)& 2.21e+03 (15)& 2.31e+03 (16)& 2.37e+03 (18)& 2.45e+03 (18)\\
 & $\mathcal{K}_{1}$ &   2.2 &  1.98 &     2 &  1.98 &   2.3 \\\hline 
\multirow{2}{*}{$10^{-5}$}
 & $\mathcal{K}$ & 2.17e+04 (15)& 2.21e+04 (16)& 2.31e+04 (17)& 2.37e+04 (20)& 2.76e+04 (21)\\
 & $\mathcal{K}_{1}$ &   2.2 &  1.98 &     2 &  1.98 &  2.64 \\\hline
\end{tabular}
\caption{Estimated condition number $\mathcal{K}$ and the effective condition number $\mathcal{K}_{1}$ in 2D}
\label{tab:2d}
\end{table}

As a second example, we consider the model problem in a 3D unit cube with similar setting. We set the coefficient $\kappa =1.0$ in subdomains $\Omega_{1} =[0.25, 0.5]^{3}$ and $\Omega_{2}=[0.5, 0.75]^{3};$ and $\kappa =\varepsilon$ for the remaining subdomain (see Figure~\ref{fig:domain3d}). The initial mesh (level =0) has a mesh size $h_{0} =2^{-2}$, which resolves the jump interface. In this example, we used 5 forward/backward Gauss-Seidel as smoother in the multigrid preconditioner, and  the stopping criteria $\|r_{k}\| / \|r_{0}\| <10^{-12}$ for the PCG algorithm.

\begin{figure}[htbp]
\centering
	\parbox{0.45\textwidth}{
       \includegraphics[width=0.39\textwidth]{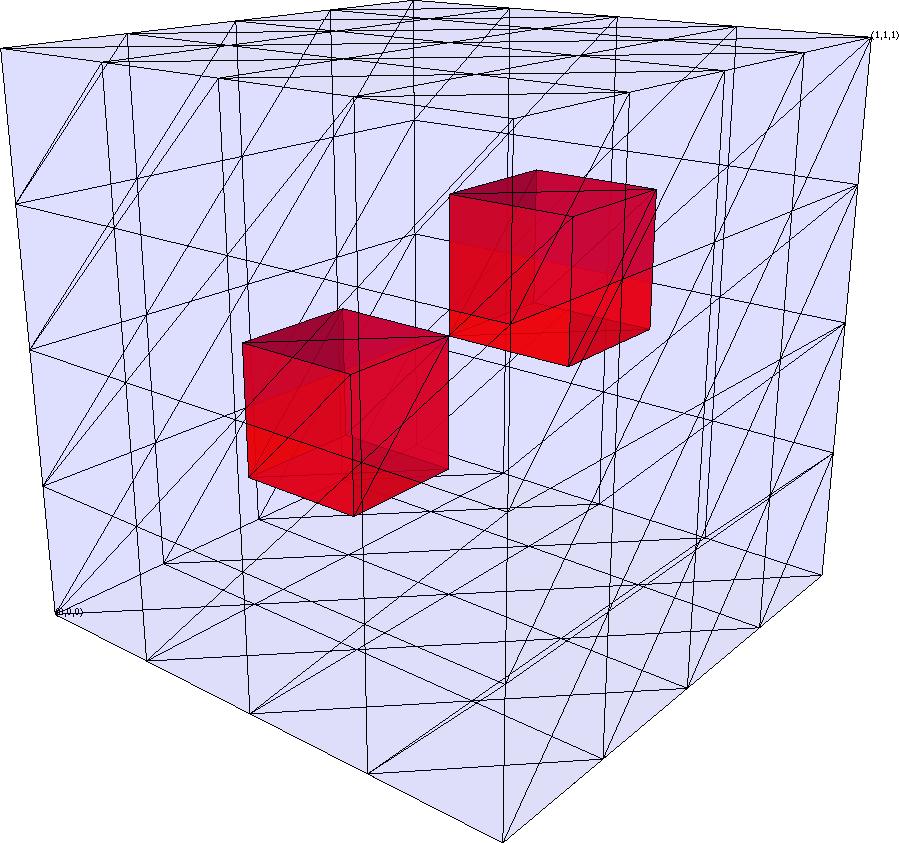}
       \caption{3D Computational Domain}
       \label{fig:domain3d}}
       \quad
       \begin{minipage}{0.45\textwidth}
       \includegraphics[width=0.99\textwidth]{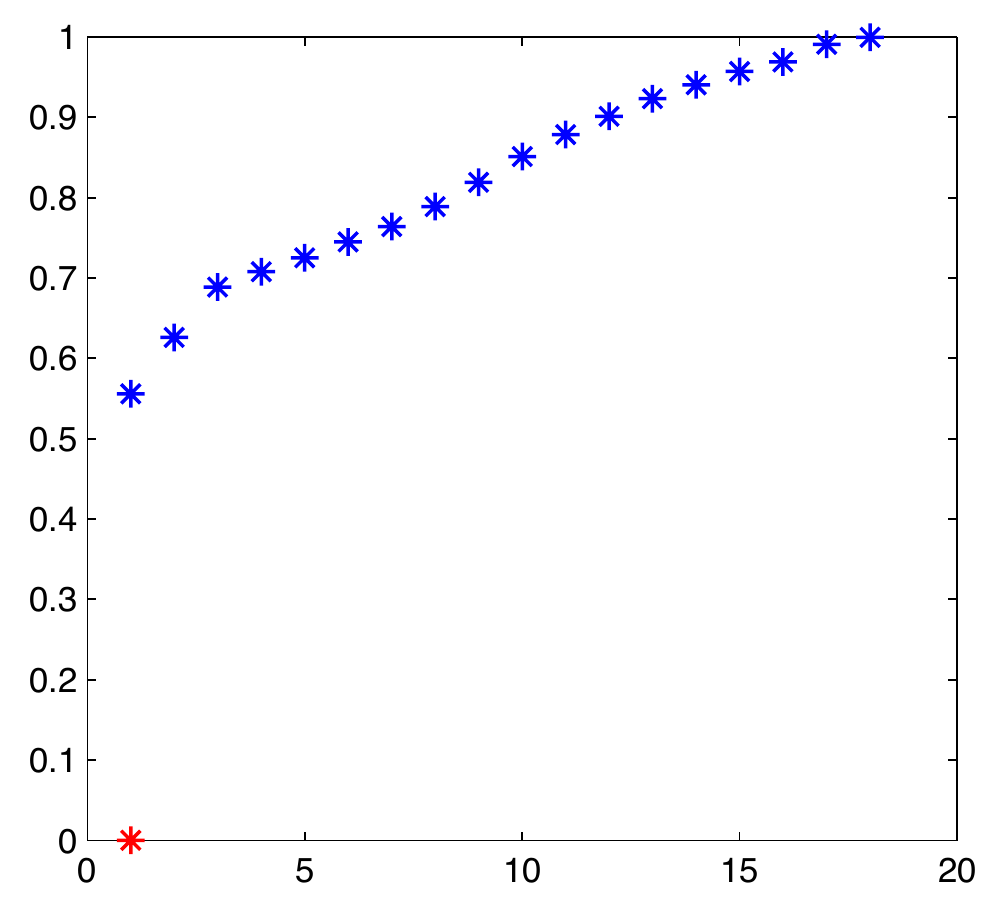}
       \caption{Eigenvalue Distribution of $BA$ in 3D}
       \label{fig:eig3d}
       \end{minipage}
\end{figure}
Figure~\ref{fig:eig3d} shows the eigenvalue distribution of the multigrid $V$-cycle preconditioned system $BA$ when $h=2^{-5}$ and $\varepsilon = 10^{-5}.$ As before, this figure shows that there is only one small eigenvalue deteriorated by the coefficient and mesh size. 

\begin{table}
\centering
\begin{tabular}{c|c||c|c|c|c}
\hline%\hline
%\multirow{2}{*}{$\varepsilon$}
 $\varepsilon$ & levels &  0         & 1        & 2        & 3        \\
%\cline{2-7}
%&  $h$   & $2^{-1}$  & $2^{-2}$   & $2^{-3}$  & $2^{-4}$  & $2^{-5}$\\
\hline\hline
\multirow{2}{*}{$1$}
 & $\mathcal{K}$ & 1.19 (8)& 1.34 (11)& 1.37 (11)& 1.36 (11)\\
 &$\mathcal{K}_{1}$ & 1.16 &  1.26 &  1.31& 1.29\\\hline
\multirow{2}{*}{$10^{-1}$}
 & $\mathcal{K}$ & 2.3 (10)& 1.94(13)& 1.75 (13)& 1.67 (14)\\
 & $\mathcal{K}_{1}$ & 1.60 & 1.56 & 1.45 &  1.43 \\\hline
\multirow{2}{*}{$10^{-3}$}
 & $\mathcal{K}$ &   86.01 (11)&   63.07 (16)&  52.67 (17)& 48.19(17)\\
 & $\mathcal{K}_{1}$ & 2.4 &  2.12  &   1.89 &  1.78 \\\hline
\multirow{2}{*}{$10^{-5}$}
 & $\mathcal{K}$ &  8.39+03 (13)&  6.15e+03 (18)&  5.13e+03 (19)&   4.70e+03(19)\\
 &  $\mathcal{K}_{1}$ &   2.44 &   2.14 &  1.91 & 1.80 \\\hline
\multirow{2}{*}{$10^{-7}$}
 & $\mathcal{K}$ &  8.39+05 (14)&  6.15e+05 (21)&  5.13e+05 (23)&  4.70e+05(21)\\
 & $\mathcal{K}_{1}$ &  2.45 &  2.14  & 1.91  & 1.80\\\hline
%\hline
\end{tabular}
\caption{Estimated condition number $\mathcal{K}$ (PCG iterations) and effective condition number $\mathcal{K}_{1}$ for multigrid $V$-cycle preconditioner in 3D}
{\label{tab:3d}}
\end{table}
Table \ref{tab:3d} shows the estimated condition number $\mathcal{K}$ (with the number of PCG iterations), and the effective condition number $\mathcal{K}_{1}$. As we can see from this table, the condition number $\mathcal{K}$ gets bigger as the jump $\mathcal{J}(\kappa) = 1/\varepsilon$ becomes larger. Thus the condition number deteriorates due to the jump of coefficient. However, the effective condition number $\mathcal{K}_{1}$ remains in a small range. This justifies that the PCG algorithm with multigrid $V$-cycle preconditioner is a robust solver for solving \eqref{eqn:model} with jump coefficients.

Table \ref{tab:2mg} and \ref{tab:5mg} shows the estimated convergence rate of the multigrid $V$-cycle with 2 Gauss-Seidel smoothers and 5 Gauss-Seidel smoothers respectively, with respect to different coefficients $\varepsilon=10^{-i},\; i=0, 1, \cdots, 5$ (rows) and different levels $L=0,1, \cdots ,3$ (columns).  The convergence rate of the multigrid $V$-cycle deteriorates rapidly with respect to the jump. Actually, when $\varepsilon\le 10^{-2}$ the multigrid $V$-cycle is unfavorable.  When the jump is not so severe, the more smoothing step, the faster the algorithm converges. However, for problems with large jump, more smoothing steps does not improve the convergence rate much. 

Another interesting phenomenon we observe from this two tables is that the convergence rate seems to be uniform with respect to the number of levels, when the jump is fixed. The same phenomenon can also be observed from Table~\ref{tab:3d} for the effective condition numbers. This is also different from the conforming case in \cite[Table 2]{Xu.J;Zhu.Y2008}, where we could observe some growth of the convergence rate with respect to the number of levels. The theoretical investigation of this phenomenon will be a future work. 
\begin{table}[htbp]
\begin{center}
\begin{tabular}{c|c||c|c|c|c|c|c}\hline
    &     & \multicolumn{6}{c}{$\varepsilon$}\\
    \hline
     Levels & h  & 1 & $10^{-1}$ & $10^{-2}$ & $10^{-3}$ &$10^{-4}$ &$10^{-5}$ \\
    \hline \hline

     0& $2^{-2}$ &  0.397 &  0.731 & 0.930 & 0.991  & 0.999 & 0.9999\\
     1& $2^{-3}$ & 0.501 & 0.675 & 0.921 & 0.990  & 0.999 & 0.9999\\
     2& $2^{-4}$ & 0.517 & 0.647 &  0.905 & 0.988  &  0.999 & 0.9999\\
     3& $2^{-5}$ & 0.524 & 0.634 & 0.895 & 0.987  & 0.999 & 0.9999\\
    \hline
\end{tabular}
\caption{Multigrid $V$-cycle convergence rate (2 Gauss-Seidel Smoothers) v.s. jump and level}{\label{tab:2mg}}
\end{center}
\end{table}

\begin{table}[htbp]
\begin{center}
\begin{tabular}{c|c||c|c|c|c|c|c}\hline
    &     & \multicolumn{6}{c}{$\varepsilon$}\\
    \hline
     Levels & h  & 1 & $10^{-1}$ & $10^{-2}$ & $10^{-3}$ &$10^{-4}$ &$10^{-5}$ \\
    \hline \hline

     0& $2^{-2}$ &  0.152 &  0.575 & 0.904 & 0.988  & 0.999 & 0.9999\\
     1& $2^{-3}$ & 0.254 & 0.485 & 0.870 & 0.984  & 0.998 & 0.9998\\
     2& $2^{-4}$ & 0.269 & 0.429 &  0.846 & 0.981  &  0.998 & 0.9998\\
     3& $2^{-5}$ & 0.286 & 0.403 & 0.832 & 0.979  & 0.998 & 0.9998\\
    \hline
\end{tabular}
\caption{Multigrid $V$-cycle convergence rate (5 Gauss-Seidel Smoothers) v.s. jump and level}{\label{tab:5mg}}
\end{center}
\end{table}

\subsection*{Acknowledgements}
This work was supported in part by NSF DMS-0715146 and DTRA Award HDTRA-09-1-0036. The author would also like to express his gratitude to Blanca Ayuso de Dios, Michael Holst and Ludmil Zikatanov for their invaluable support and encouragement on this work.

%\bibliographystyle{abbrv}

%\bibliography{../bib/library}

\end{document}